\newtheorem{theorem}{Theorem}
\newtheorem{lemma}{Lemma}[section]
\newtheorem{definition}{Definition}
\def\l{\left}
\def\r{\right}
\newcommand \ad{\text{ad}\thinspace}
\newcommand\la{\langle}
\newcommand\ra{\rangle}
\renewcommand{\frak}{\mathfrak}
\begin{document}

\title {Borcherds' proof of the Conway-Norton conjecture}
\author{Elizabeth Jurisich }

\maketitle

\begin{center} Department of Mathematics\\
  The College of Charleston\\
   Charleston SC 29424\\ jurisiche@cofc.edu \end{center}

\begin{abstract} We give a summary of R. Borcherds' solution (with  
some modifications) to the following part of the Conway-Norton  
conjectures:
Given the Monster $\mathbb M$ and Frenkel-Lepowsky-Meurman's  
moonshine module $V^\natural$, prove the equality between the graded  
characters of the elements of $\mathbb M$ acting on $V^\natural$  
(i.e., the McKay-Thompson
series for $V^\natural$) and the modular functions provided by Conway  
and Norton. The equality is established using the homology of a  
certain subalgebra of the monster Lie algebra, and the
Euler-Poincar\'e identity.
\end{abstract}

\section{Introduction}
In this paper we present a summary of R. Borcherds' proof of part of  
the Conway-Norton ``monstrous moonshine" conjectures:  the proof that  
the McKay-Thompson series of the Monster simple group acting on the  
known structure $V^\natural$ \cite{FLM} do indeed correspond to the  
Hauptmoduls presented in Conway and Norton
\cite{CN}. Interested readers should certainly consult primary  
sources, a few of which are \cite{MR843307}, \cite{FLM1}, \cite{FLM},  
and \cite{B3}. The simplification of the original proof, presented in  
this paper, can be found in \cite{Jur2} and \cite{jlw}. See also  
Borcherds' survey articles about moonshine \cite{MR1341831} and \cite 
{MR1660657}. A brief overview of the historical development of the  
subject can be found in \cite{FLM}.  What the reader will find in  
this paper is an outline of the proof itself, with references to  
particular results needed to establish the equality between the 
McKay-Thompson series for $V^\natural$ and the Laurent expansions of the  
relevant modular functions.

Given a group $G$ and a $\mathbb Z$-graded $G$-module $V = \coprod_ 
{ n \in \mathbb Z}V_{[n]}$, with $\text{dim} V_{[n]} <\infty$ for all  
$n\in \mathbb Z$ truncated so $V_{[k]} = 0$ for $k <N$ for some fixed  
$N \in \mathbb Z$, the McKay-Thompson series for $g \in G$ 
acting on $V$ is defined to be the graded trace
$$T_g (q) = \sum_{n>N} \text{tr}(g|V_{[n]}) q^n.$$
The moonshine conjectures of Conway and Norton in \cite{CN} include  
the conjecture that there
should be
an infinite-dimensional representation of the (not yet constructed)  
Fischer-Griess Monster simple group $\mathbb M$
such that the McKay-Thompson series $T_g$ for $g \in \mathbb M$  
acting on $V$
have coefficients that are equal to the coefficients of the $q$-series  
expansions of certain modular functions. After the construction of $ 
\mathbb M$ \cite{Gr}, a ``moonshine module" $V^\natural$ for the  
Monster simple group was
constructed \cite{FLM1}, \cite{FLM} and many of its properties,  
including the
determination of some of its McKay-Thompson series, were proven.
The critical example involves the modular function $j(\tau)$,  a  
generator for the field of functions invariant under the action of  
$SL_2(\mathbb Z)$ on the upper half plane.  Let $q = e^{2\pi i \tau} 
$.  Then the normalized $q$-series expansion denoted by $J(q) = j(q)-  
744$ is one of the Hauptmoduls that occur in the moonshine  
correspondence \cite{CN}. Results of \cite{FLM} include the vertex  
operator algebra structure of $V^\natural$, with the conformal grading  
$V^\natural = \coprod_{i\geq 0} V^\natural_i$, as an $\mathbb
M$-module, and the graded dimension correspondence $\text{dim}\,V^ 
\natural_i \leftrightarrow c(i-1)$ to the coefficients of  the  
modular function $J(q) = \sum_{n\geq -1} c(n) q^n$. In other words,  
shifting the grading by defining $  V^\natural_{[i-1]}= V^\natural_i 
$, we have $T_e(q) =  J(q)$ as formal series, where $e\in \mathbb M$ is  
the identity element.

After the above results, the nontrivial problem of computing the rest of
the McKay-Thompson series of Monster group elements acting
on $V^\natural$ remained. Borcherds  showed in \cite{B3}
that the McKay-Thompson series are the expected modular functions.
The argument can be summarized as follows:  Borcherds establishes a  
product formula
$$
p(J(p)-J(q))=  \prod_{ { i= 1,2, \ldots},  { j = -1,1,\ldots}}
(1-p^iq^j)^{c(ij)}.
$$
This formula is used in the proof, but first note that the formula  
leads to recursion formulas for the coefficients of the $q$-series  
expansion of $J(q)$, and hence to recursions for the dimensions of  
the homogeneous components of $V^\natural$.   The approach of \cite 
{B3} is to establish a product formula involving all of the
McKay-Thompson series $T_g(q)$ of elements of $g\in \mathbb M$ acting on $V^ 
\natural$, analogous to the above identity for $T_e(q)= J(q)$. The  
more general product formula in turn leads to a set of recursion  
formulas that determine the coefficients of the series, given a  
(large) set of initial data. For example, to determine $\sum {\rm tr} 
(g|V^\natural_{i+1})q^{i } = \sum_{i\geq -1} c_g(i )q^i$ it is  
sufficient to compute four of the first five coefficients of $\sum   
\text{tr}(g^k|V^\natural_{i+1})q^{i }$, $k \in \mathbb Z$, $g\in  
\mathbb M$.  One also determines that the Hauptmoduls listed in \cite 
{CN} satisfy the same recursion relations and initial data as the  
McKay-Thompson series for $V^\natural$.

The crucial product identity for the McKay-Thompson series is  
obtained by Borcherds from the Euler-Poincar\'e identity for the  
homology groups of a particular Lie algebra, the ``monster" Lie  
algebra. This Lie algebra is constructed using the tensor product of  
the vertex operator algebra $V^\natural$ and a vertex algebra  
associated with a two-dimensional Lorentzian lattice.
The  monster Lie algebra , $\mathfrak m$, constructed in \cite{B3}  
is  an infinite-dimensional $\mathbb Z \times \mathbb Z$-graded Lie  
algebra.  The Lie algebra $\frak m$ is then shown to be a generalized  
Kac-Moody algebra, or a Borcherds algebra.  The ``No-ghost" theorem of  
string theory is used as a step in establishing an isomorphism  
between the
$\mathbb Z \times \mathbb Z$-homogeneous components of $\mathfrak m$  
and the weight spaces of $V^\natural$. The product formula for $p(J(p)- 
J(q))$ is interpreted as the denominator formula for the Lie algebra $ 
\mathfrak m$ and used to determine the simple roots. The results  
pertaining to the homology groups of Lie algebras of \cite{GL} are  
then extended to include the class of Borcherds algebras and  
applied to a subalgebra $\mathfrak n^-$ of
$\mathfrak m$ to obtain the desired family of identities. Of course,  
the recursions and initial data must also be established for the  
Hauptmoduls; see \cite{Ko}  and  \cite{MR1371745}.

In this paper we will actually discuss a modification of Borcherds  
proof, in which it is not necessary to generalize the homology  
results of \cite{GL}. We shall compute the homology groups as in   
\cite{Jur2}, \cite{jlw}  with  respect to a smaller subalgebra  $ 
\mathfrak u^-\subset \frak n^-$. We shall use $\mathfrak u^-$ because  
it is a free Lie algebra, and therefore computing the homology groups  
is straightforward.  The Euler-Poincar\'e identity applied to the  
subalgebra $\mathfrak u^-$ and the trivial $\mathfrak u^-$-module
$\mathbb C$ leads to recursions sufficient to establish the  
correspondence between the McKay-Thompson series for $V^\natural$
and the Hauptmoduls specified by Conway and Norton \cite{CN}.

\section{Vertex operator algebras.}

We begin by recalling the definition of vertex operator algebra and  
vertex algebra. The following definition is a variant of Borcherds'  
original definition in
\cite{MR843307}. For a detailed discussion the reader can consult  
\cite{FLM}, \cite{FHL}, \cite{DL}.

\begin{definition} A {\it vertex operator algebra}, $(V,Y,{\bf
1},\omega)$, consists of a vector space $V$, distinguished vectors  
called
the {\it vacuum vector} $\bf 1$ and the {\em conformal vector}
$\omega$, and a linear map $Y(\cdot, z): V
\rightarrow (\mbox{End }V)[[z,z^{-1}]]$ which is a generating function
for operators $v_n$, i.e., for $v \in V,\ Y(v,z)= \sum_{n \in \mathbb Z}
v_n z^{-n-1}$, satisfying the following conditions:
\begin{description}
\item[(V1)] $V =\coprod_{n \in \mathbb Z} V_{n}$; for $v \in V_{n}$,  
$n= {\rm wt}
(v)$
\item[(V2)] $\dim V_{n} < \infty$ for $n \in \mathbb Z$
\item[(V3)] $V_{n}=0 $ for $n$ sufficiently small
\item[(V4)] If $u,v \in V$ then $u_nv =0$ for $n$ sufficiently large
\item[(V5)] $Y({\bf 1},z) = 1$
\item[(V6)] $Y(v,z){\bf 1} \in V[[z]]$ and $\lim_{z \rightarrow 0}
Y(v,z){\bf 1}= v$, i.e., the {\it creation property} holds
\item[(V7)] The following {\it Jacobi identity} holds:
$$ z_0^{-1} \delta \l( {\frac {z_1-z_2} {z_0}}\r)Y(u, z_1)Y(v,z_2)
  -z_0^{-1}\delta \l( {\frac {z_2-z_1} {-z_0}}\r)Y(v,z_2)Y(u,z_1) $$
\begin{equation}
  = z_2^{-1} \delta \l({\frac{z_1-z_0} { z_2}}\r) Y(Y(u,z_0)v,z_2).
\end{equation}
\end{description}
The following conditions relating to the vector $\omega$ also hold;
\begin{description}
\item[(V8)] The operators $\omega_n$ generate a Virasoro algebra  
i.e., if
we let $L(n) = \omega_{n+1}$ for $n \in \mathbb Z$ then
\begin{equation}
[L(m),L(n)] = (m-n)L(m+n) + (1/12) (m^3-m)\delta_{m+n,0}(\mbox{rank} 
\, V)
\end{equation}
\item[(V9)] If $v \in V_{n}$ then $L(0)v = ({\rm wt}\, v)v = nv$
\item[(V10)] ${\frac d {dz} }Y(v,z)= Y(L(-1)v,z)$.
\end{description}\end{definition}

\begin{definition}  A {\it vertex algebra} (with conformal vector)
  $(V,Y,{\bf 1}, \omega)$ is a vector space $V$ with all of the above  
properties except for
$\bf V2$ and $\bf V3$.
\end{definition}

For a vertex algebra $V$, and  $v \in V$ with $\text{wt} \, v = n$,  
let $(-z^{-2})^{L(0)} v = (-z^{-2})^n v$. This action extends  
linearly to all of $V$.

\begin{definition}
A bilinear form on a vertex algebra $V$ is {\em invariant} if for  
$v,u,w \in V$
$$
( Y(v,z)u,w  ) =
( u, Y(e^{zL(1)}(-z^{-2})^{L(0)}v,z^{-1})w  ) .
$$
\label{def:inv}
\end{definition}

We note that an invariant form satisfies $( u, v )=0$ unless ${\rm  
wt} (u) ={\rm wt} (v)$ for $u,v$ homogeneous elements of $V$.

The tensor
product of vertex algebras is also a vertex
algebra \cite{FHL}, \cite{DL}. Given two vertex algebras $(V,Y,{\bf 1} 
_V, \omega_V)$ and
$(W,Y,{\bf 1}_W, \omega_W)$ the vacuum of
$V \otimes W$ is ${\bf 1}_V \otimes {\bf 1}_W$
and the conformal vector $\omega$ is given by $\omega_V \otimes
{\bf 1}_W + {\bf 1}_V \otimes \omega_W$.
If the vertex algebras $V$
and $W$ both have invariant forms in the sense of Definition \ref 
{def:inv} then it follows from the definition of the tensor product
that the form on $V \otimes W$ given by the product of the forms on
$V$ and $W$ is also invariant.

One large and important class of vertex algebras are those associated  
with even lattices.
Although the moonshine module $V^\natural$ is not a vertex operator  
algebra associated with a lattice (it is a far more complicated  
object), it is constructed using the vertex operator algebra  
associated to the Leech lattice. The vertex algebra used in the proof  
of the moonshine correspondence is the tensor product of the  
moonshine module and a vertex algebra associated with a two- 
dimensional Lorentzian lattice.

Given an even lattice $L$ the vertex algebra $V_L$ \cite{MR843307}  
associated to the lattice has
underlying vector space
$$V_L=S({\hat {\frak h}^-}_\mathbb Z)\otimes \mathbb C\{L\}.$$
(We are using the notation and constructions in \cite{FLM}.)
Here we take $\frak h= L \otimes_{\mathbb Z} \mathbb C$, and
$\hat {\frak h}^-_{\mathbb Z}$ is the
negative part of the Heisenberg algebra (with $c$ central) defined by
$$\hat {\frak h}_{\mathbb Z} =\coprod_{n \in \mathbb Z} \frak h  
\otimes t^n
\oplus {\mathbb C} c \subset \frak h \otimes {\mathbb C} [[t]] \oplus
{\mathbb C} c,$$
so that
$$\hat {\frak h}^-_\mathbb Z =
\coprod_{n <0} \frak h \otimes t^n.$$
  The symmetric algebra on $\hat {\frak h}_{\mathbb Z}^-$ is denoted  
$S(\hat
{\frak h}^-_\mathbb Z)$. Let $\hat L$ be a central extension of $L$  
by a group of order $2$,
i.e.,
$$ 1 \rightarrow \la \kappa | \kappa^2 =1\ra \rightarrow \hat {L}
  {\overset  {- }{ \rightarrow}} L \rightarrow 1,$$
with commutator map given by $\kappa^{\la \alpha, \beta\ra}$, $
\alpha, \beta \in L$.
  Define $\mathbb C \{L\}$ to be the induced
module
$\mbox{Ind}_{\la \kappa \ra}^{\hat L} {\mathbb C}$, where $\kappa$  
acts on $\mathbb C$ as multiplication by $-1$.

If $a \in {\hat L}$
denote by $\iota(a)$ the element $a \otimes 1 \in \mathbb C \{L\}$. We
will use the
notation $\alpha(n) = \alpha \otimes t^n \in S(\hat {\frak h}^-_ 
\mathbb Z)$.
The vector space $V_L$ is spanned by elements of the form:
\begin{equation}
\alpha_1(-n_1)\alpha_2(-n_2)\ldots\alpha_k(-n_k)\iota(a)\label{eq:span}
\end{equation}
where $a\in \hat L, \alpha_i \in \mathfrak h$ and $n_i \in \mathbb N$.
The space $V_L$, equipped with $Y(v,z)$ as defined in \cite{FLM}
satisfies properties $\bf V1$ and $\bf V4 - V10$, so
is a vertex algebra with conformal vector $\omega$.

A vertex algebra  $V_L$ constructed from an even lattice $L$  
automatically has an invariant bilinear form \cite{MR843307}, which  
can be defined using the contragredient module $V'_L$ \cite{FHL}.

\section{Construction of the monster Lie algebra from the moonshine  
module.}

The moonshine module $V^\natural$, a graded $\mathbb M$-module and vertex  
operator algebra, is constructed in \cite 
{FLM}.  The following results describing the structure and properties  
of $V^\natural$ appear in Corollary 12.5.4 and  Theorem 12.3.1 of  
\cite{FLM}, part of  which we restate here for the convenience of the  
reader. The invariance of the form in the sense of Definition \ref 
{def:inv} follows from the construction and results of \cite{LiHS};  
see \cite{Jur2}. Recall that $J(q)$ denotes the Laurent, or
$q$-series, expansion of the modular function $j(\tau)$, normalized so  
that the coefficient of $q^0$ is zero.
\begin{theorem}
\begin{enumerate}\item The graded dimension of the moonshine module  
$V^\natural$ is $J(q)$.
\item $V^\natural$ is a vertex operator algebra of rank $24$.
\item $\mathbb M$ acts in a natural way as automorphisms as of the vertex operator  
algebra $V^\natural$, i.e.,
$$gY(v,z)g^{-1} = Y(gv,z)$$
for $g \in \mathbb M, v \in V^\natural$
\item There is an invariant positive definite hermitian form $(\cdot,  
\cdot)$ on $V^\natural$ which is also invariant under $\mathbb M$.
\end{enumerate}
\end{theorem}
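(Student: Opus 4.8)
The four assertions are quoted from \cite{FLM} (Theorem~12.3.1 and Corollary~12.5.4), so the task is to recall the orbifold construction of $V^\natural$ and indicate where each item is established. The plan is to start from the Leech lattice $\Lambda$ — the unique even unimodular lattice of rank $24$ with no vectors of norm $2$ — form the lattice vertex algebra $V_\Lambda = S(\hat{\mathfrak h}^-_{\mathbb Z})\otimes\mathbb C\{\Lambda\}$ of Section~2, and use the canonical automorphism $\theta$ of $V_\Lambda$ induced by the $-1$ isometry of $\Lambda$. Writing $V_\Lambda^+$ for the $\theta$-fixed subalgebra and $V_\Lambda^T$ for the (essentially unique) $\theta$-twisted $V_\Lambda$-module, with $\theta$ extended to it, one sets
$$V^\natural \;=\; V_\Lambda^+ \;\oplus\; (V_\Lambda^T)^+ ,$$
with $\mathbf 1$ and $\omega$ taken from $V_\Lambda^+$. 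The vertex operator $Y$ on $V^\natural$ is assembled from the untwisted operators on $V_\Lambda^+$, the twisted operators giving the module structure on $V_\Lambda^T$, and an intertwining operator pairing the two summands; the technical heart of \cite{FLM} is the verification that this $Y$ satisfies $\mathbf{V1}$ and $\mathbf{V4}$--$\mathbf{V10}$, especially the Jacobi identity $\mathbf{V7}$. Granting this, item~(2) follows: the conformal vector of $V^\natural$ is that of $V_\Lambda$, whose central charge equals the rank $24$ of the Leech lattice, while the finiteness conditions $\mathbf{V2}$ and $\mathbf{V3}$ are visible from the graded dimension of item~(1).

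For item~(1) I would compute graded dimensions. The graded dimension of $V_\Lambda$ is $q^{-1}\Theta_\Lambda(q)\big/\prod_{n\ge 1}(1-q^n)^{24}$, which because $\Lambda$ has no norm-$2$ vectors equals $J(q)+24$. Expressing the graded dimension of $V^\natural$ as
$$\tfrac12\bigl(\chi(V_\Lambda)+\chi_\theta(V_\Lambda)\bigr)+\tfrac12\bigl(\chi(V_\Lambda^T)+\chi_\theta(V_\Lambda^T)\bigr),$$
with $\chi_\theta$ the graded trace of $\theta$ and the twisted characters the analogous theta/eta quotients furnished by the twisted-module character formula, one reduces the sum to $J(q)$ by an elementary manipulation of $\eta$- and $\Theta$-identities; this is the classical computation behind McKay's observation $196884 = 196883 + 1$ at the level of weight-two dimensions.

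Item~(3) is the deep point and the main obstacle. The automorphisms of $V_\Lambda$ coming from $\mathrm{Aut}(\hat\Lambda)$ that commute with $\theta$, suitably extended over $(V_\Lambda^T)^+$, descend to automorphisms of $V^\natural$ fixing $\mathbf 1$ and $\omega$; supplemented by certain additional automorphisms exchanging the untwisted and twisted sectors, they generate a finite group $G$ of vertex operator algebra automorphisms, which visibly contains an extraspecial $2$-group $2^{1+24}$ and a cover of the Conway group $Co_1$. Equipping the $196884$-dimensional weight-two space $V^\natural_2$ with the product $u\cdot v = u_1 v$ and the invariant form makes it the commutative non-associative Griess algebra, on which $G$ acts by form-preserving algebra automorphisms. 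The hard part is then group-theoretic: using Griess's construction and the known characterization of $\mathbb M$ (see \cite{Gr}, \cite{FLM}), one identifies $G$ — which by construction contains the required subgroups — with the Monster, yielding the asserted action $\mathbb M\hookrightarrow\mathrm{Aut}(V^\natural)$ with $gY(v,z)g^{-1} = Y(gv,z)$.

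Finally, for item~(4): by Section~2 the lattice vertex algebra $V_\Lambda$ carries a canonical invariant bilinear form, and the explicit Fock-space description of $V_\Lambda$ and of $V_\Lambda^T$ lets one refine it to a positive-definite hermitian form on each summand; their orthogonal sum is a positive-definite hermitian form on $V^\natural$ that is invariant in the sense of Definition~\ref{def:inv}, the invariance being precisely what the results of \cite{LiHS} (see \cite{Jur2}) supply. An invariant form pairs $V^\natural_n$ only with itself, so it is determined up to a positive scalar; since $\mathbb M$ acts by automorphisms fixing $\mathbf 1$ and $\omega$ it preserves this ray, and as $\mathbb M$ is perfect the normalized form is automatically $\mathbb M$-invariant.
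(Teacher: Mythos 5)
The paper offers no proof of this theorem: it is an explicit restatement of Corollary 12.5.4 and Theorem 12.3.1 of \cite{FLM} (with the invariance of the form in the sense of Definition \ref{def:inv} referred to \cite{LiHS} and \cite{Jur2}), and your proposal correctly recognizes this and gives a faithful, accurate outline of the FLM orbifold construction $V^\natural = V_\Lambda^+ \oplus (V_\Lambda^T)^+$ that establishes each item. This is essentially the same approach as the paper's (deferring the substance to \cite{FLM} and \cite{Gr}), so there is nothing to correct.
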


In \cite{B3} the monster Lie algebra is constructed using $V^\natural 
$ and the vertex algebra associated with a Lorentzian lattice as  
follows.  Let ${ \Pi_{1,1}}=\mathbb Z \oplus \mathbb Z$ be the rank  
two Lorentzian lattice with bilinear form
$\la \cdot,\cdot \ra$ given by the matrix
$\left(\begin{array}{cc}
       0 & -1 \\
       -1 & 0
\end{array}\right)$. The vertex algebra $V_{ \Pi_{1,1} }$ has a  
conformal vector, and is given the structure of a trivial $\mathbb
M$-module.  Since $V_{  \Pi_{1,1} }$ is a vertex algebra  associated with  
an even lattice  it has an invariant bilinear form, which we consider  
as $\mathbb M$-invariant under the trivial group action. Note that $V_ 
{  \Pi_{1,1} }$ is not a vertex operator algebra, because it does not  
satisfy conditions $\bf (V2)$ or $\bf (V3)$. For example, the weight  
of an element of the form (\ref{eq:span}) is $\sum_{i=1}^k  
n_i + {\frac 1 2}\left< a,a\right>$, $n_i > 0 \in \mathbb Z$, $a \in  
\Pi_{1,1}$, which can be less than zero and arbitrarily large in  
absolute value.

\begin{lemma} The tensor product  $V =  V^\natural\otimes V_{  \Pi_ 
{1,1} }$ is a vertex algebra with conformal vector, and an invariant  
bilinear form, which is also $\mathbb M$-invariant. \label{3.1}
\end{lemma}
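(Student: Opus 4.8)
The plan is to obtain the lemma by combining the general facts recalled above about tensor products of vertex algebras with the specific properties of the two factors, so that essentially nothing new has to be proved. First I would note that $V^\natural$ is in particular a vertex algebra with conformal vector (being a vertex operator algebra it satisfies all of $\mathbf{V1}$--$\mathbf{V10}$), and that $V_{\Pi_{1,1}}$ is a vertex algebra with conformal vector $\omega_{\Pi_{1,1}}$ (it satisfies $\mathbf{V1}$ and $\mathbf{V4}$--$\mathbf{V10}$, failing only $\mathbf{V2}$ and $\mathbf{V3}$). By the cited fact that the tensor product of two vertex algebras is again a vertex algebra, $V = V^\natural \otimes V_{\Pi_{1,1}}$ is a vertex algebra with vacuum $\mathbf{1}_{V^\natural}\otimes\mathbf{1}_{\Pi_{1,1}}$ and conformal vector $\omega = \omega_{V^\natural}\otimes\mathbf{1}_{\Pi_{1,1}} + \mathbf{1}_{V^\natural}\otimes\omega_{\Pi_{1,1}}$; in particular the operators attached to $\omega$ close into a Virasoro algebra whose rank is the sum of the ranks of the factors, $24 + 2 = 26$. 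It is worth recording explicitly that $V$ is only a vertex algebra and not a vertex operator algebra, since the failure of $\mathbf{V2}$ and $\mathbf{V3}$ for $V_{\Pi_{1,1}}$ is inherited by the tensor product.

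For the bilinear form I would invoke two inputs: (i) the bilinear form on $V^\natural$ is invariant in the sense of Definition \ref{def:inv}, which is the content of the construction of \cite{FLM} together with the results of \cite{LiHS} and \cite{Jur2} cited above; and (ii) $V_{\Pi_{1,1}}$, being the vertex algebra of an even lattice, carries an invariant bilinear form by \cite{MR843307}, built from its contragredient module. By the observation recorded in the discussion of tensor products, the form on $V$ given by the product of the two forms, $(v_1\otimes w_1,\, v_2\otimes w_2) = (v_1,v_2)_{V^\natural}\,(w_1,w_2)_{\Pi_{1,1}}$, is then invariant in the sense of Definition \ref{def:inv}.

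It remains to track the Monster action. By the Theorem above, $\mathbb{M}$ acts on $V^\natural$ by vertex operator algebra automorphisms, so $gY(v,z)g^{-1} = Y(gv,z)$, $g\mathbf{1}_{V^\natural} = \mathbf{1}_{V^\natural}$ and $g\omega_{V^\natural} = \omega_{V^\natural}$; on $V_{\Pi_{1,1}}$ it acts trivially. Hence $\mathbb{M}$ acts on $V$ by $g\cdot(v\otimes w) = gv\otimes w$, and the intertwining relation for the tensor-product vertex operators, together with $g\mathbf{1}_V = \mathbf{1}_V$ and $g\omega = \omega$, shows that $\mathbb{M}$ acts on $V$ by vertex algebra automorphisms. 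Moreover $g$ preserves the invariant bilinear form on $V^\natural$ — the space of such forms is one-dimensional (since $V^\natural_0 = \mathbb{C}\mathbf{1}$ and $V^\natural_1 = 0$) and $g$ fixes the vacuum, so it fixes the normalized form — and acts trivially on the form of $V_{\Pi_{1,1}}$; therefore it preserves the product form on $V$. This gives all the assertions of the lemma.

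I expect no genuine obstacle here: the only non-formal ingredient is input (i), namely that the form produced in \cite{FLM} on $V^\natural$ satisfies the precise functional equation of Definition \ref{def:inv}, and this is exactly what is supplied by \cite{LiHS} and \cite{Jur2} and is simply quoted. Everything else is a routine unwinding of the definition of the tensor-product vertex algebra and of the tensor-product action of $\mathbb{M}$, so the proof amounts to careful bookkeeping rather than to any new idea.
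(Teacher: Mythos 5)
Your proposal is correct and follows essentially the same route as the paper, which states the lemma as an immediate consequence of the preceding facts: the tensor product of vertex algebras with invariant forms is a vertex algebra whose product form is invariant, the form on $V^\natural$ is invariant in the sense of Definition \ref{def:inv} and $\mathbb M$-invariant by the quoted theorem, and the lattice vertex algebra $V_{\Pi_{1,1}}$ carries an invariant form regarded as $\mathbb M$-invariant for the trivial action. The extra details you supply (rank $26$, uniqueness of the normalized form on $V^\natural$) are harmless additions to the same argument.
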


Given a vertex
operator algebra $V$, or a vertex algebra $V$ with conformal vector $ 
\omega$
and therefore an action of the Virasoro algebra, let $$P_{i} = \{ v  
\in V
| L(0)v = iv , L(n)v = 0 \mbox{ if } n>0\}.$$ Thus $P_{i}$ consists of
the lowest weight vectors for the Virasoro algebra of conformal
weight $i$. ($P_{1}$ is called the {\it physical space}). Let $u\in  
P_0$, then  $\text{wt} L(-1) u = \text{wt}\omega + \text{wt} u -1 = 1 
$ and $L_{-1} P_0 \subset P_1$.

\begin{lemma}
The space $P_{1}/L(-1)P_{0}$ is a Lie algebra with bracket
given by $$[u +L(-1)P_{0} ,v+L(-1)P_{0}] = u_0v + L(-1)P_{0}.$$
For $u,v \in P_1$. \label{3.2}
\end{lemma}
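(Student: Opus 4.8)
The plan is to check three things, in this order: that the formula gives a well-defined bilinear operation on $P_1/L(-1)P_0$ (with values there), that it satisfies the Jacobi identity, and that it is antisymmetric. Everything rests on the Jacobi identity (V7), used in its ``commutator formula'' form $[u_m,v_n]=\sum_{i\geq 0}\binom{m}{i}(u_iv)_{m+n-i}$, together with (V8)--(V10); only the last of the three points requires real work.

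First I would record the commutation relations attached to $P_1$. For $v\in P_1$ one has $\omega_0v=L(-1)v$, $\omega_1v=L(0)v=v$, and $\omega_iv=L(i-1)v=0$ for $i\geq 2$, so the commutator formula collapses to $[L(j),v_k]=-k\,v_{j+k}$ for all $j\in\mathbb Z$. In particular $v_0$ commutes with the whole Virasoro algebra, and since $v_0$ preserves each weight space (its weight is $\operatorname{wt}v-1=0$) it maps each $P_i$ into $P_i$ and, commuting with $L(-1)$, maps $L(-1)P_0$ into $L(-1)P_0$. Combined with the identity $(L(-1)a)_0=0$ for all $a\in V$, which is the $n=0$ case of $(L(-1)a)_n=-n\,a_{n-1}$ (i.e.\ of (V10)), this yields all the needed bookkeeping: $u_0v\in P_1$ for $u,v\in P_1$; replacing $u$ by $u+L(-1)a$ with $a\in P_0$ leaves $u_0v$ unchanged, since $(L(-1)a)_0v=0$; and replacing $v$ by $v+L(-1)b$ with $b\in P_0$ changes $u_0v$ by $u_0L(-1)b=L(-1)(u_0b)\in L(-1)P_0$, because $u_0b\in P_0$. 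So the bracket descends to a well-defined bilinear map on $P_1/L(-1)P_0$.

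For the Jacobi identity I would invoke the commutator formula again, in the case $m=n=0$, where it reads $[u_0,v_0]=(u_0v)_0$. For $u,v,w\in P_1$ this gives $u_0(v_0w)-v_0(u_0w)=(u_0v)_0w$, an identity already valid in $P_1$; this is precisely the Jacobi identity for the bracket, and it descends to the quotient at once.

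Antisymmetry is the step I expect to be the main obstacle. Skew-symmetry of vertex algebras, $Y(u,z)v=e^{zL(-1)}Y(v,-z)u$ (a standard consequence of (V7) and (V10); see \cite{FHL}), gives $u_0v=\sum_{k\geq 0}\frac{(-1)^{k+1}}{k!}L(-1)^kv_ku$, a finite sum by (V4). The $k=0$ term is $-v_0u$, so $u_0v+v_0u=L(-1)\bigl(\sum_{k\geq 1}\frac{(-1)^{k+1}}{k!}L(-1)^{k-1}v_ku\bigr)\in L(-1)V$; since we already know $u_0v+v_0u\in P_1$, it suffices to prove the clean statement $P_1\cap L(-1)V=L(-1)P_0$. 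The inclusion $\supseteq$ is immediate (and uses $L(-1)P_0\subset P_1$, noted before the lemma). For $\subseteq$, write such an element as $L(-1)y$; since $L(-1)$ raises weight by $1$ and $P_1\subset V_1$, one may take $y\in V_0$. Applying $L(m)$ to $L(-1)y$ for $m\geq 1$ and using $[L(m),L(-1)]=(m+1)L(m-1)$ together with $L(m)(L(-1)y)=0$ gives $(m+1)L(m-1)y+L(-1)L(m)y=0$; for $m\geq 2$, writing $k=m-1\geq 1$, this is $L(k)y=-\tfrac{1}{k+2}L(-1)L(k+1)y$, and iterating expresses $L(k)y$ as a nonzero scalar times $L(-1)^nL(k+n)y$ for every $n\geq 1$. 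But $L(k+n)y=\omega_{k+n+1}y=0$ once $n$ is large, by (V4), so $L(k)y=0$ for all $k\geq 1$; as $L(0)y=0$ automatically, $y\in P_0$. Hence $u_0v+v_0u\in L(-1)P_0$, the bracket is antisymmetric on the quotient, and $P_1/L(-1)P_0$ is a Lie algebra.
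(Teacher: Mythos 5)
Your proof is correct and follows essentially the same route as the paper's: skew-symmetry $Y(u,z)v=e^{zL(-1)}Y(v,-z)u$ with the coefficient of $z^{-1}$ extracted for antisymmetry, and the $m=n=0$ commutator formula $[u_0,v_0]=(u_0v)_0$ for the Jacobi identity. The only difference is that you supply details the paper leaves implicit --- well-definedness of the bracket on the quotient, and the identification $P_1\cap L(-1)V=L(-1)P_0$ justifying that the tail $\sum_{k\ge 1}\frac{(-1)^{k+1}}{k!}L(-1)^kv_ku$ lies in $L(-1)P_0$, which the paper simply asserts --- and these additions are accurate.
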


  \begin{proof}
  Let $u,v \in P_1$. By formula (8.8.7) of \cite{FLM} (see \cite 
{MR843307})
\begin{equation}
  Y(u,z)v = e^{zL(-1)} Y(v,-z) u. \label{eq:antisym}
\end{equation}
Taking coefficients of $z^{-1}$ on both sides of (\ref{eq:antisym})  
yields
$$u_0v = -v_0u + \sum_{k=1}^\infty (-1)^{k-1} L(-1)^k v_k u \in -v_0u  
+ L(-1) P_0.$$
Thus the bracket is anti-symmetric.
Let $u,v,w \in P_1$ The Jacobi identity $\bf (V7)$  implies
$$ u_0v_0 w -v_0u_0 w = (u_0v)_0 w
$$
$$(u_0(v_0 w))-(v_0(u_0w))-((u_0v)_0w) = 0.$$
Since we have shown anti-symmetry (modulo $L(-1) P_0$) the above is  
equivalent to the usual Lie algebra Jacobi identity for
  $[ \cdot , \cdot ] $ on $P_1/L(-1) P_0$.
  \end{proof}

  We can now give the definition of Borcherds' monster Lie algebra.
  The tensor product  $V =  V^\natural\otimes V_{  \Pi_{1,1} }$ is a  
vertex algebra with conformal vector, and invariant bilinear form.
This form
induces a bilinear form on the Lie
algebra $P_{1}/L_{-1}P_{0}$. Note that if $u,v,w \in P_1 $, then  by  
invariance, and the fact that $L(1) u = 0$
\begin{multline} (Y(u,z) v,w) = ( v, Y(e^{zL(1)}(-z^{-2})^{L(0)}u,z^ 
{-1})w)  \\
= -(v, Y(u,z^{-1}) z^{-2} w )
  = - (v,\sum_{n \in \mathbb Z} u_n z^{n-1}).
\end{multline}
Taking the coefficient of $z^{-1}$  we have  for $u, v, w \in P_1/L 
(-1) P_0$
  $$  (u_0 v, w) = -( v  , u_0w ),$$
and so the form on $P_1/L(1) P_0$ is invariant in the usual Lie  
algebra sense.

In addition to the weight grading, the vertex algebra $V^\natural 
\otimes V_{  \Pi_{1,1} }$ is graded by the lattice $\Pi_{1,1}$. For  
$u,v$ elements of degree $r,s \in  \Pi_{1,1}$,  the invariant form  
satisfies $(u,v) = 0$ unless $r = s$.

Let $  N (\cdot, \cdot)$ denote the nullspace of the bilinear form on  
$P_1$ so
  $  N (\cdot, \cdot)=\{u \in  P_{1} \, |\,  (u,v) = 0 \quad \forall  
v \in   P_{1}  \}$. Since, for $u = L(-1) v, v \in P_0$, $w \in P_1$,  
it is immediate that
  $(L(-1) v, w ) = ( v, L(1) w ) = 0$, we see $L(-1) P_0 \subset N 
(\cdot, \cdot)$.  This in conjunction with  Lemma \ref{3.2} ensures  
the following is a Lie algebra.

\begin{definition} The {\it monster Lie algebra} $\frak m$
is defined by
$$\frak m
=P_{1}/ N(\cdot,\cdot) .$$
\end{definition}

The monster Lie algebra is graded by the Lorentzian lattice $\Pi_{1,1} 
$ by construction. Elements of $\frak m$ can be written as $\sum u
\otimes v e^r$, where $u \in V^{\natural}$ and $ ve^r = v \iota (e^r)
  \in V_{\Pi_{1,1}}$. Here, a section of the map $\hat \Pi_{1,1}
  {\overset { -} { \rightarrow}} \Pi_{1,1}$ has been chosen so that  
$e^r \in
{\hat \Pi_{1,1}}$ satisfies $\overline{e^r}= r
\in \Pi_{1,1} $. There is a grading of $\frak m$ by the lattice  
defined by
$\deg (u \otimes ve^r)=r$. It follows from the construction that the  
Lie algebra $\mathfrak m$ has a Lie invariant bilinear form, whose  
radical is zero.

In order to establish the equality between the coefficients of the  
McKay-Thompson series for $V^\natural$ and the given Hauptmoduls, it  
is necessary to determine the dimensions of the components of $ 
\mathfrak m$ of degrees $r \in  \Pi_{1,1}$.
Borcherds  \cite{B3} computes the dimensions by using Theorem 2  
below, which uses the No-ghost theorem of string theory. For a proof  
of the No-ghost theorem see \cite{GT}, \cite{B3}, or the appendix of  
\cite{Jur2} for one written more algebraically.

\begin{theorem}
Let $V$ be a vertex operator algebra
with the following properties:
\begin{itemize}
  \item [i.] $V$ has a symmetric invariant nondegenerate bilinear form.
  \item [ii.] The central element of the Virasoro algebra acts as
multiplication by 24.
  \item [iii.] The weight grading of $V$ is an $\mathbb N$-grading of  
$V$,
i.e., $V = \coprod_{n=0}^\infty V_{ n }$, and $\dim V_{ 0 }=1$.
  \item [iv.] $V$ is acted on by a group $G$ preserving the above
structure; in particular the form on $V$ is $G$-invariant.
\end{itemize}
Let $ P_{ 1 } = \{ u \in V \otimes V_{\Pi_{1,1}} | L(0 )u
=u, L(i)u =0, i>0\}$. The group $G$ acts on $V \otimes V_{\Pi_{1,1}}$  
via the
trivial action on $V_{\Pi_{1,1}}$. Let ${ P}_{ 1 }^r$ denote the
subspace of $  P_{ 1 }$ of degree $r \in
\Pi_{1,1}$. Then the quotient of $  P_{ 1 }^r$ by the nullspace of its
bilinear form is isomorphic as a $G$-module with $G$-invariant
bilinear form to $V_{ 1- \la r,r\ra /2 }$ if $r \neq 0$ and to $V_{ 1 }
\oplus {\mathbb C^2}$ if $r =0$.
\end{theorem}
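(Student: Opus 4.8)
The plan is to reduce the statement to the string-theoretic No-ghost theorem, which identifies the BRST cohomology of a suitable bosonic string with a space of "transverse" physical states. First I would recall the setup: the lattice $\Pi_{1,1}$ has signature $(1,1)$, so $V \otimes V_{\Pi_{1,1}}$ is a vertex algebra of central charge $24 + 2 = 26$ (using property ii.\ for $V$ together with $\mathrm{rank}\,V_{\Pi_{1,1}} = 2$). This is exactly the critical dimension, so one has a nilpotent BRST operator $Q$ obtained after tensoring with the rank-$26$ bosonic ghost vertex (super)algebra. The No-ghost theorem in the form proved in \cite{GT} (see also \cite{B3} and the appendix of \cite{Jur2}) then says: for each nonzero momentum $r \in \Pi_{1,1}$, the degree-$r$ part of the physical space $P_1^r = \{u : L(0)u = u,\ L(i)u = 0,\ i>0\}$ modulo the nullspace of its bilinear form is isomorphic, as a space with bilinear form, to the weight space of $V$ at weight $1 - \langle r,r\rangle/2$; the isomorphism is constructed by using the two light-cone oscillators $\alpha_{\pm}(n)$ coming from $\Pi_{1,1}$ to peel off the "longitudinal" modes, leaving the transverse Fock space, which for the lattice part is just the momentum-$r$ line $\mathbb{C}e^r$ and for the $V$ part is $V_{1 - \langle r,r\rangle/2}$. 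The shift $1 - \langle r,r\rangle/2$ is precisely the level at which $L(0)$ acts as $1$ once the lattice contribution $\langle r,r\rangle/2$ to the conformal weight is subtracted.

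The key steps, in order, are as follows. (1) Verify the central charge bookkeeping: $c(V) + c(V_{\Pi_{1,1}}) = 26$, using ii.\ and the conformal vector $\omega_V \otimes \mathbf{1} + \mathbf{1} \otimes \omega_{\Pi_{1,1}}$ of the tensor product described in Section 2. (2) Check the hypotheses of the No-ghost theorem against i.--iv.: property i.\ gives the nondegenerate invariant form on $V$, which combined with the standard invariant form on $V_{\Pi_{1,1}}$ (Lemma \ref{3.1}-style tensor construction) yields the invariant form on $V \otimes V_{\Pi_{1,1}}$ needed to make sense of the nullspace quotient; property iii.\ ($\mathbb{N}$-grading with $\dim V_0 = 1$) ensures the Fock-space structure is the expected one and pins down the normalization; property iv.\ guarantees $G$ acts on everything compatibly and commutes with $Q$, $L(0)$, and the oscillators $\alpha_{\pm}(n)$ (since $G$ acts trivially on $V_{\Pi_{1,1}}$), so the isomorphism produced by the No-ghost argument is automatically $G$-equivariant and form-preserving. (3) Apply the No-ghost theorem for $r \neq 0$ to get $P_1^r / N \cong V_{1 - \langle r,r\rangle/2}$ as $G$-modules with form. (4) Handle the degenerate case $r = 0$ separately: here the lattice momentum is zero, so the light-cone decomposition breaks down (the operators one would divide by become zero or non-invertible), and one must compute $P_1^0$ by hand. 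The weight-$1$ vectors of degree $0$ in $V \otimes V_{\Pi_{1,1}}$ that are Virasoro-primary are: $v \otimes \mathbf{1}$ with $v \in V_1$ primary (these contribute $V_1$), together with the two extra states coming from the $\Pi_{1,1}$ oscillators at level $1$ — namely $\mathbf{1} \otimes \alpha(-1)\iota(1)$ for $\alpha$ ranging over the two-dimensional space $\mathfrak{h} = \Pi_{1,1} \otimes \mathbb{C}$; since $G$ acts trivially on this two-dimensional piece, and none of it lies in the nullspace after one checks the form is nondegenerate there, one gets the extra summand $\mathbb{C}^2$. This yields $P_1^0 / N \cong V_1 \oplus \mathbb{C}^2$ as claimed.

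The main obstacle is step (4) and, more generally, keeping careful track of the nullspace of the bilinear form throughout. The No-ghost theorem as usually stated is about a cohomology space; translating it into the concrete statement about $P_1^r$ modulo the radical of its form requires knowing that the BRST cohomology at the relevant ghost number is computed by $P_1^r / N$, which is itself a nontrivial part of the argument (essentially the content of the algebraic reformulation in the appendix of \cite{Jur2}). The $r = 0$ case is genuinely exceptional because the transverse-oscillator argument divides by data that degenerates at zero momentum, so it cannot be absorbed into the generic computation and must be done directly; the bookkeeping that exactly two extra dimensions survive (no more, no fewer, and as a trivial $G$-module) is the delicate point. I would therefore present steps (1)--(3) briskly, citing \cite{GT} and \cite{Jur2} for the hard analytic core, and devote the bulk of the written proof to verifying the hypotheses carefully and to the explicit $r = 0$ computation.
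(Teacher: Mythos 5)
The paper states Theorem 2 without proof, deferring entirely to \cite{GT}, \cite{B3}, and the appendix of \cite{Jur2}; your sketch is a correct outline of exactly the argument those sources give (critical central charge $24+2=26$, light-cone/transverse-state construction for $r\neq 0$, explicit computation at $r=0$ yielding $V_1\oplus\mathfrak h$ with $\mathfrak h = \Pi_{1,1}\otimes\mathbb C \cong \mathbb C^2$), and your identification of the $r=0$ degeneration and the nullspace bookkeeping as the delicate points is accurate. The only mild quibble is that routing the statement through BRST cohomology is an unnecessary detour --- the cited versions (old covariant quantization in \cite{GT}, the algebraic form in \cite{Jur2}) are stated directly for $P_1^r$ modulo the radical of the form, which is the form needed here --- but you flag and correctly resolve that translation yourself.
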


Applying Theorem 2 to $V=  V^\natural\otimes V_{  \Pi_{1,1} }$
we see that the monster Lie algebra has $(m,n) \in \mathbb Z \times  
\mathbb Z$ homogeneous
subspaces isomorphic to  the weight spaces $V^\natural_{  mn +1 }$  
when $(m,n)\neq (0,0)$, that is,
$\frak m_{(m,n)}= V^\natural_{[mn]}$. We have shown:
$$
\frak n^+ = \coprod_{m >0 ,n\geq -1 } \frak m_{(m,n)},
$$with
$$ \frak m_{(m,n)} \simeq V^\natural_{mn + 1}
$$
and similarly for $\frak n^-$.

\section{ The structure of the monster Lie algebra}
  A crucial step in \cite{B3} is to identify $\frak m = P_{ 1 }/N 
(\cdot, \cdot)$ with a Lie algebra  given by a generalization of a  
Cartan matrix. This allows one to be able to compute the homology  
groups of the trivial module $\mathbb C$ with respect to an  
appropriate subalgebra of $\mathfrak m$, as can be done for
symmetrizable Kac-Moody Lie algebras \cite{GL}, \cite{Liu}.

  The Lie algebra $\frak g(A)$ associated to a symmetrizable matrix   
$A$ is introduced in \cite{K} and \cite{M}, but the systematic study of the case  
where $A$  satisfies conditions {\bf B1-B3} below was carried out by  
Borcherds.
  Borcherds algebras have many properties in common with  
symmetrizable Kac-Moody algebras such as an invariant bilinear form  
and a root lattice grading. One notable difference is that there may  
be simple imaginary roots in the root lattice. This is a desirable  
property in the monster case $\frak m$ because we wish to associate a  
root grading to the
hyperbolic $\mathbb Z \times \mathbb Z$-grading inherited from $V_ 
{\Pi_{1,1}}$.

We review the construction of the Borcherds algebra $\mathfrak g(A)$  
of \cite{B1}. Let $I$ be a (finite or) countable index set and let $A  
= (a_{ij})_{i,j \in
I}$ be a
matrix with entries in ${\mathbb C}$, satisfying the following  
conditions:
\begin{description}
\item[(B1)]  $A$ is symmetric.
\item[(B2)]  If $ i\neq j$ ($i,j \in I$), then $a_{ij}~\leq~0 $.
\item[(B3)]  If $a_{ii} > 0$ ($i \in I$), then ${2a_{ij} / a_{ii}}
\in \mathbb Z $ for all $j  \in I$.
\end{description}
Let $\frak g'(A)$ be the Lie algebra with generators
$h_{i}, e_i, f_i$, $i \in I$, and the following defining
relations: For all $i,j,k \in I$,

$$ \left[ h_{i}, h_{j}\right] =0, \left[e_i , f_j \right] -   \delta_ 
{ij} h_{i} =0,$$
$$\left[ h_{i}, e_k\right]   -   a_{ik} e_k =0, \left[h_{i}, f_k 
\right]  +   a_{ik} f_k =0$$
and Serre relations
$$ (\ad e_i)^{{-2a_{ij} / a_{ii}} + 1}e_j =0, (\ad f_i)^{{-2a_{ij} /  
a_{ii}} + 1}f_j =0$$
for all $ i \neq j \mbox{ with } a_{ii} > 0$, and finally
$$[e_i, e_j]=0,  [f_i,f_j]=0$$ whenever $a_{ij} =0$.

  Let $\mathfrak h = \sum_{i \in I} \mathbb C h_i$, $\mathfrak n^{\pm} 
$ the subalgebra generated by the elements $e_i$ (resp. the $f_i$)  
for $i \in I=\langle e_i\rangle$. As in the Kac-Moody case, the  
simple roots $\alpha_i \in (\mathfrak h)^*$ are defined to satisfy  $ 
(\alpha_i, \alpha_j) = a_{ij}$. Also as in the Kac-Moody case, we may  
have linearly dependent simple roots $\alpha_i$ and we extend the Lie  
algebra as in \cite{GL} and \cite{L} by an appropriate abelian Lie  
algebra $\mathfrak d$ of degree derivations, chosen so that the  
simple roots are linearly independent in $(\mathfrak h\ltimes  
\mathfrak d)^*$.

  \begin{definition}
The Lie algebra $\frak g(A)= g'(A)\ltimes \mathfrak d$ is the {\it  
Borcherds} or
{\it generalized Kac-Moody} (Lie) algebra associated to the matrix $A$.
Any
Lie algebra of the form $\frak g(A) / \frak c$ where $\frak c$ is a
central ideal is also called a Borcherds algebra.
\end{definition}

Versions of the following theorem appear in \cite{B1} and \cite{B4},  
see also \cite{Jur2}. This theorem
allows us to recognize a Lie algebra associated to a matrix $A$  
satisfying $\bf B1-B3$.
\begin{theorem} Let $\frak g$ be a Lie algebra satisfying the
following conditions:
\begin{enumerate}
  \item  $\frak g$ can be $\mathbb Z$-graded as
  $\coprod_{i\in {\mathbb Z}} \frak g_i$, $\frak g_i$ is finite  
dimensional if $i \neq 0$, and
$\frak g$ is diagonalizable with respect to $\frak g_0$.
\item  $\frak g$ has an involution $\eta$ which maps $\frak g_i$
onto $\frak g_{-i}$ and acts as $-1$ on noncentral elements of
$\frak g_0$, in particular, $\frak g_0$ is abelian.
\item  $\frak g$ has a Lie algebra-invariant bilinear form $(\cdot, 
\cdot)$,
invariant under $\eta$, such that $\frak g_i$ and $\frak g_j$ are
orthogonal if $i \neq -j$, and such that the form $(\cdot,\cdot)_0$,
defined by $(x,y)_0 = -(x, \eta (y))$ for $x,y \in \frak g$, is positive
definite on $\frak g_m $ if $m \neq 0$.
\item  $\frak g_0 \subset [\frak g,\frak g]$.
\end{enumerate}
Then there is a central extension $\hat {\frak g}$ of a Borcherds  
algebra and a homomorphism, $\pi$, from $\hat
{\frak g}$ onto
$\frak g$, such that the kernel of $\pi$ is in the center of $\hat
{\frak g}$.
\label{recognize}
\end{theorem}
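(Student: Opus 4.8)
The plan is to reconstruct the Cartan-like data directly from the grading and the form, then invoke the defining presentation of a Borcherds algebra. First I would analyze $\frak g_0$: by (2) it is abelian and $\eta$ acts as $-1$ modulo the center, and by (4) it lies in $[\frak g,\frak g]$. The degree-one piece $\frak g_1$ is finite dimensional by (1), and using (3) the form $(\cdot,\cdot)_0$ is positive definite on it, so I can choose an orthonormal-up-to-scaling basis $e_i$, $i\in I$ (with $I$ countable since each $\frak g_i$ is finite dimensional and there are countably many degrees). Set $f_i=-\eta(e_i)\in\frak g_{-1}$ and $h_i=[e_i,f_i]\in\frak g_0$. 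The bracket relations $[e_i,f_j]=\delta_{ij}h_i$ for the \emph{same} degree follow from orthogonality of $\frak g_1$ with respect to $(\cdot,\cdot)_0$ together with the invariance identity $([e_i,f_j],x)=(e_i,[f_j,x])$ pushed against $\frak g_0$; that the off-diagonal brackets vanish is exactly where positive-definiteness is used. Define $a_{ij}=(\alpha_i,\alpha_j)$ by transporting the form on $\frak g_0$ through $h_i\mapsto\alpha_i$; then (B1) is symmetry of the form, (B2) follows because $[e_i,e_j]\in\frak g_2$ and a standard $\mathfrak{sl}_2$-type positivity argument (using that $(\cdot,\cdot)_0$ is positive definite on $\frak g_2$) forces $a_{ij}\le 0$ for $i\ne j$, and (B3) is the usual integrality coming from the finite-dimensional $\mathfrak{sl}_2$-representation generated by $e_j$ under $\ad e_i,\ad f_i$ when $a_{ii}>0$.

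Next I would show that the $e_i,f_i,h_i$ generate $\frak g$. Because $\frak g$ is $\mathbb Z$-graded with $\frak g$ diagonalizable over $\frak g_0$, it suffices to show $\frak g_1$ generates $\frak g_{\ge 1}$ (and dually $\frak g_{\le -1}$), and that $\frak g_0\subset[\frak g_1,\frak g_{-1}]$. The latter is condition (4) combined with the observation that $[\frak g,\frak g]\cap\frak g_0=[\frak g_1,\frak g_{-1}]$ (any bracket raising and lowering degree back to $0$ can, using the grading and the Jacobi identity, be rewritten in terms of the degree-$\pm 1$ pieces after adjoining the $h_i$). For the positive part I would argue that if $\frak g'$ is the subalgebra generated by the $e_i$, then $\frak g'$ is an ideal-complemented graded subspace whose orthogonal complement under $(\cdot,\cdot)$ is $\eta$-stable and bracket-closed, hence an ideal on which the positive-definite form $(\cdot,\cdot)_0$ would vanish — forcing it to be zero. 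This is the step I expect to be the main obstacle: controlling that nothing in higher degree escapes the subalgebra generated by $\frak g_1$, precisely because in the Borcherds (as opposed to Kac-Moody) setting one cannot rule out imaginary simple roots, so the generation argument must be phrased entirely through the nondegeneracy and positivity of the form rather than through any Serre-relation normal form.

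Finally, with the matrix $A=(a_{ij})$ satisfying \textbf{B1--B3}, I would compare $\frak g$ with the Borcherds algebra $\frak g(A)=\frak g'(A)\ltimes\frak d$. The relations $[h_i,h_j]=0$, $[h_i,e_k]=a_{ik}e_k$, $[h_i,f_k]=-a_{ik}f_k$, and the Serre relations hold in $\frak g$ by the computations above (the Serre relations because the relevant $\ad e_i$-strings are finite-dimensional $\mathfrak{sl}_2$-modules, forcing the top vector to vanish; the commuting relations $[e_i,e_j]=0$ when $a_{ij}=0$ because the corresponding bracket lies in a degree-$2$ space on which the form is positive definite yet pairs trivially). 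Hence there is a surjective homomorphism $\frak g'(A)\to[\frak g,\frak g]$; extending by the abelian algebra $\frak d$ of degree derivations needed to separate possibly linearly dependent simple roots (as in \cite{GL}, \cite{L}) gives a surjection $\frak g(A)\twoheadrightarrow\frak g$. It remains to identify the kernel as central: any element of the kernel is homogeneous, and if it had nonzero degree it would lie in a space where $(\cdot,\cdot)_0$ is positive definite but, being in the kernel, pairs trivially with everything — contradiction; so the kernel sits in degree $0$, i.e. in $\frak h\oplus\frak d$, and one checks it brackets trivially with all $e_i,f_i$, hence is central in $\hat{\frak g}:=\frak g(A)$. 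This produces the central extension $\hat{\frak g}$ and the map $\pi$ with the asserted properties. $\qed$
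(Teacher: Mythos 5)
There is a genuine gap, and it sits exactly where you flagged it: the claim that the $e_i$ can all be taken in $\frak g_1$ and that $\frak g_1$ then generates $\coprod_{i\ge 1}\frak g_i$. This is false in the generality of the theorem, and it is false precisely in the intended application. For the monster Lie algebra, graded by $i=2m+n$, the simple roots are the vectors $(1,n)$ with $n=-1$ or $n>0$, which land in degrees $1,3,4,5,\dots$; the corresponding simple root vectors are by definition \emph{not} obtainable by bracketing lower-degree elements, so the subalgebra generated by $\frak g_1$ misses almost all of $\frak g$. This is the whole point of allowing imaginary simple roots in the Borcherds setting. Your argument for the generation step also does not work on its own terms: if $\frak g'$ is the subalgebra generated by $\frak g_1$ and $W_n\subset\frak g_n$ is its orthogonal complement with respect to $(\cdot,\cdot)_0$, positive definiteness gives $W_n\cap\frak g'_n=0$, but it does not force $W_n=0$ --- the form restricted to $W_n$ is positive definite, not zero. ($W_n$ is orthogonal to $\frak g'_n$, not to itself.) Indeed $W_n$ is exactly the space of new simple root vectors of degree $n$.

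The correct argument, which the paper only sketches ("inductively constructing a set of generators of $\frak g_n$, $n\in\mathbb Z$, consisting of $\frak g_0$ weight vectors, using the form $(x,y)_0$", with the full proof deferred to \cite{B2} and \cite{Jur2}), adjoins new generators at \emph{every} positive degree: having chosen generators in degrees $<n$, one takes $W_n$ to be the $(\cdot,\cdot)_0$-orthogonal complement in $\frak g_n$ of $\sum_{0<m<n}[\frak g_m,\frak g_{n-m}]$, picks a $\frak g_0$-weight basis $\{e_i\}$ of $W_n$, and sets $f_i=-\eta(e_i)$, $h_i=[e_i,f_i]$; the matrix entries are $a_{ij}$ with $[h_i,e_j]=a_{ij}e_j$, symmetric by invariance of the form after normalizing $(e_i,f_i)=1$. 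The rest of your outline (verifying \textbf{B1--B3}, the Serre and commuting relations via $\mathfrak{sl}_2$-theory and positivity, extending by $\frak d$, and locating the kernel in the center via positive definiteness off degree $0$) is in the right spirit and survives once the generating set is corrected, though you should also note that $[e_i,f_j]=0$ must be checked for generators of \emph{different} degrees (where the bracket lands in $\frak g_{n_i-n_j}\ne\frak g_0$ and is killed by orthogonality to brackets of lower-degree elements), and that $a_{ij}\le 0$ for two imaginary indices requires an argument not reducible to finite-dimensional $\mathfrak{sl}_2$-strings.
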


The theorem is proven by inductively constructing a set of   
generators of $\mathfrak g_n$, $n \in \mathbb Z$, consisting of $ 
\mathfrak g_0$ weight vectors, using the form  $(x,y)_0 $.  Proofs  
can be found in \cite{B2}, see \cite{Jur2} for the theorem stated  
exactly as above. An alternative characterization of Borcherds  
algebra can be found in \cite{B3}.

\begin{theorem}
The Lie algebra $\frak m
=P_{ 1 }/\mbox{N}(\cdot,\cdot) $ is a Borcherds algebra. \end{theorem}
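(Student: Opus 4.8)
The plan is to verify that the monster Lie algebra $\frak m = P_1/N(\cdot,\cdot)$ satisfies the four hypotheses of Theorem \ref{recognize}, and then to check that the resulting Borcherds algebra (central extension) in fact equals $\frak m$ on the nose, i.e.\ that the central extension and the degree-zero piece cause no trouble. First I would set up the $\mathbb Z$-grading required by hypothesis (1). The $\Pi_{1,1}$-grading on $V = V^\natural\otimes V_{\Pi_{1,1}}$ gives $\frak m$ a $\mathbb Z\times\mathbb Z$-grading by $\frak m_{(m,n)}$; I would collapse this to a $\mathbb Z$-grading by setting $\frak m_i = \coprod_{m-n = i}\frak m_{(m,n)}$ (or by pairing against a suitable vector in $\Pi_{1,1}\otimes\mathbb R$). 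By Theorem 2 and the computation already recorded in the excerpt, $\frak m_{(m,n)}\simeq V^\natural_{mn+1}$ for $(m,n)\neq(0,0)$, which is finite-dimensional, and $\frak m_{(0,0)}$ is (the image in $\frak m$ of) $V^\natural_1\oplus\mathbb C^2$; since $V^\natural_1 = 0$, the degree-zero space $\frak m_0 = \frak m_{(0,0)}$ is two-dimensional and abelian, and one checks $\frak m$ is diagonalizable under its adjoint action — this is exactly the $\Pi_{1,1}$-weight decomposition, with $\frak m_0$ acting by the grading. That handles (1).

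Next, hypotheses (2) and (3): the involution $\eta$ should come from an anti-automorphism of the vertex algebra $V$ combined with the map $e^r\mapsto e^{-r}$ on $V_{\Pi_{1,1}}$, negating the lattice grading and hence sending $\frak m_i$ to $\frak m_{-i}$; on $\frak m_0$ it acts as $-1$ (the two-dimensional space $\mathbb C^2$ coming from the $r=0$ part of Theorem 2 carries exactly the sign change). For hypothesis (3) I would use the $\mathbb M$-invariant bilinear form on $V$ from Lemma \ref{3.1}, which descends to the invariant form on $\frak m = P_1/N(\cdot,\cdot)$ whose radical is zero by construction; the orthogonality of $\frak m_{(r)}$ and $\frak m_{(s)}$ for $r\neq s$ was already noted in the excerpt, and the contravariant form $(x,y)_0 = -(x,\eta(y))$ is positive definite on each nonzero graded piece because it is identified, via Theorem 2 and part (4) of Theorem 1, with the positive-definite hermitian form on the weight space $V^\natural_{mn+1}$. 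Hypothesis (4), $\frak m_0\subset[\frak m,\frak m]$, is checked by exhibiting, for each of the two basis directions of $\frak m_0\cong\mathbb C^2$, a bracket $[x,y]$ with $x\in\frak m_{(r)}$, $y\in\frak m_{(-r)}$ landing there — concretely using root vectors of degree $(1,1)$ and $(-1,-1)$ (or $(1,-1)$ and $(-1,1)$), whose bracket is nonzero by nondegeneracy of the form since $(\,[x,y],h\,)=(x,[h,y])$ can be made nonzero for suitable $h\in\frak m_0$.

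Given all four hypotheses, Theorem \ref{recognize} yields a central extension $\hat{\frak m}$ of a Borcherds algebra $\frak g(A)$ surjecting onto $\frak m$ with central kernel. To conclude that $\frak m$ itself is a Borcherds algebra, I would argue the central kernel is trivial, or invoke the closing clause of the definition: ``any Lie algebra of the form $\frak g(A)/\frak c$ with $\frak c$ central is also called a Borcherds algebra.'' Since $\frak m$ has zero-radical invariant form, any central ideal is contained in the radical of a suitable extension of the form and can be shown to vanish; in any case $\frak m$ is a quotient of $\frak g(A)$ by a central ideal, hence a Borcherds algebra by definition. I expect the main obstacle to be the careful bookkeeping around $\frak m_0$: verifying that it is genuinely two-dimensional and abelian, that $\eta$ acts by $-1$ there, and that it lies in $[\frak m,\frak m]$ — all of which hinge on reading off the $r=0$ case of Theorem 2 correctly and on the vanishing $V^\natural_1 = 0$. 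The rest is a matter of transporting the invariant form and involution through the construction, which is routine once the No-ghost identification of $\frak m_{(m,n)}$ with $V^\natural_{mn+1}$ is in hand.
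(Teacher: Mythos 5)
Your overall strategy coincides with the paper's: verify hypotheses (1)--(4) of Theorem \ref{recognize} for $\frak m$, and then pass from the resulting central extension of a Borcherds algebra to $\frak m$ itself by noting that the kernel is central (because the invariant form on $\frak m$ has zero radical by construction) and that quotients by central ideals are Borcherds algebras by definition. The involution $\eta$ negating the lattice $\Pi_{1,1}$, the use of the $\mathbb M$-invariant form from Lemma \ref{3.1} for hypothesis (3), the identification of the contravariant form with the positive definite form on $V^\natural_{mn+1}$ via Theorem 2, and the brackets of root vectors of degrees $\pm(1,1)$ and $\pm(1,-1)$ producing two independent vectors of $\frak m_{(0,0)}$ for hypothesis (4) all match the paper's proof (you do need \emph{both} pairs, not one or the other, since $\frak m_{(0,0)}$ is two-dimensional).

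The one genuine defect is your concrete choice of $\mathbb Z$-grading, $\frak m_i=\coprod_{m-n=i}\frak m_{(m,n)}$. The functional $(m,n)\mapsto m-n$ is not ``suitable'': the nonzero positive root spaces of $\frak m$ sit in degrees $(1,-1)$ and $(m,n)$ with $m,n\geq 1$, so $m-n$ vanishes on $(1,1),(2,2),\ldots$ and equals $1$ on $(2,1),(3,2),\ldots$. Consequently your $\frak m_0$ contains the infinite sum $\coprod_{k\geq 1}\frak m_{(k,k)}\oplus\frak m_{(-k,-k)}$, which is neither abelian (exactly because of the brackets you invoke for hypothesis (4)) nor acts diagonalizably, and your $\frak m_{\pm 1}$ are infinite-dimensional; this breaks conditions (1) and (2) of Theorem \ref{recognize} at the outset. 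What is needed is a linear functional strictly positive on all positive roots and taking each value on only finitely many roots; the paper uses $i=2m+n$, which sends $(1,-1)\mapsto 1$ and $(m,n)\mapsto 2m+n\geq 3$ for $m,n\geq 1$, so that $\frak m_0=\frak m_{(0,0)}$ is the two-dimensional abelian subalgebra and each $\frak m_i$ with $i\neq 0$ is a finite sum of finite-dimensional spaces. Your parenthetical ``pairing against a suitable vector'' is the right instinct, but the choice you actually wrote down fails; with the grading corrected, the rest of your argument goes through as in the paper.
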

\begin{proof}
The abelian subalgebra $\frak m_{(0,0)}$ is spanned by elements of the
form $ 1 \otimes  \alpha(-1)\iota(1)$ where $\alpha \in {\Pi_{1,1}}
\otimes_{\mathbb Z}\mathbb C$. Note that $\frak m_{(0,0)}$ is
two-dimensional. In order to apply Theorem \ref{recognize}, grade $ 
\mathfrak \sum_{(m,n) \in \Pi_{1,1}} m_{(m,n)}$ by $i = 2m + n \in  
\mathbb Z$. With this grading,  $\mathfrak m$ satisfies condition (i) of
Theorem \ref{recognize}.

   There is an involution $\eta$ is  on the vertex algebra $V_{\Pi_ 
{1,1}}$, determined by $\eta (\alpha) = -\alpha$ for $\alpha \in \Pi_ 
{1,1}$. Extend the involution to $V^\natural \otimes V_{\Pi_{1,1}}$  
by taking $\eta (\sum u \otimes v) = \sum (u \otimes \eta v)$ for $u  
\in V^\natural, v \in V_{\Pi_{1,1}}$. The invariant form  given by  
Lemma \ref{3.1}  is the required non-degenerate invariant bilinear  
form, satisfying  condition (iii) in Theorem \ref{recognize}. Let $a =  
e^{(1,1)}$,
$b =e^{(1,-1)}$. Condition (iv) follows from the fact that $\mathfrak  
m_{(0,0)}$ is two-dimensional and that the elements
$
[u \otimes\iota(a), v \otimes \iota(a^{-1})] \label{eq:g}
$
  and
$
[\iota(b),\iota(b^{-1})] \label{eq:be}
$
  for  $u,v \in V_{ 2 }^\natural$ are two linearly independent  
vectors in
$\frak m_{(0,0)}$.  Thus the Lie algebra $\mathfrak m$ is the  
homomorphic image of some Borcherds algebra $\mathfrak g(A)$  
associated to a matrix.
\end{proof}

By computing the action of $a \in \mathfrak g_0 = \mathfrak m_{(0,0)} 
$ on $v\in \mathfrak m_{r}$, $r \in \Pi_{1,1}$ one obtains $ [a, v]=  
\langle \alpha, r\rangle v$  and Borcherds identifies the elements
of $\Pi_{1,1}$ with the root lattice of $\frak m$. The following is  
Theorem 7.2 of \cite{B3}.

\begin{theorem}  The simple roots of the monster Lie algebra $ 
\mathfrak m$ are the vectors $(1,n)$, $n = -1$ or $n>0$, each with  
multiplicity $c(n)$.
\end{theorem}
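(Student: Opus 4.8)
The plan is to combine the structural identification of $\mathfrak m$ as a Borcherds algebra (Theorem 4) with the explicit computation of its homogeneous dimensions $\dim \mathfrak m_{(m,n)} = \dim V^\natural_{mn+1} = c(mn)$, coming from the No-ghost theorem (Theorem 2), and then to extract the simple roots from the denominator identity via the product formula $p(J(p)-J(q)) = \prod_{i\geq 1,\, j\geq -1}(1-p^iq^j)^{c(ij)}$. Since any Borcherds algebra possesses a denominator identity of the form $\sum_{w\in W}(-1)^{\ell(w)} w\bigl(e^\rho \sum_\mu (-1)^{?}e^\mu\bigr) = e^\rho \prod_{\alpha>0}(1-e^\alpha)^{\dim \mathfrak g_\alpha}$, and since the Weyl group here is trivial (the matrix $A$ has no entries with $a_{ii}>0$ giving a reflection, because all the simple roots will turn out to be imaginary), the identity collapses to $\prod_{\alpha>0}(1-e^\alpha)^{\dim\mathfrak g_\alpha} = \sum_{\text{subsets of simple roots}}(\pm)\,e^{\text{sum}}$, with the right side governed entirely by the \emph{imaginary} simple roots. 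Matching this against the Borcherds product for $p(J(p)-J(q))$ forces the simple root multiplicities.

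Concretely, I would proceed as follows. First, I would use Theorem 2 (via the No-ghost theorem) to record that $\mathfrak m_{(m,n)} \cong V^\natural_{mn+1}$ for $(m,n)\neq(0,0)$, so $\dim\mathfrak m_{(m,n)} = c(mn)$, where $c$ are the coefficients of $J(q)=\sum_{n\geq-1}c(n)q^n$; in particular the root $(m,n)$ has nonzero multiplicity exactly when $mn\geq -1$, i.e. for $m>0,n\geq -1$ (and the mirror image with $m<0$), and $\mathfrak m$ has a triangular decomposition $\mathfrak n^-\oplus\mathfrak m_{(0,0)}\oplus\mathfrak n^+$ with $\mathfrak n^+ = \coprod_{m>0,n\geq -1}\mathfrak m_{(m,n)}$. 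Second, I would invoke Theorem 4: $\mathfrak m = \mathfrak g(A)/\mathfrak c$ for some matrix $A$ satisfying (B1)--(B3), and the identification of $\Pi_{1,1}$ with the root lattice (from $[a,v]=\langle\alpha,r\rangle v$) means the simple roots are certain vectors $r\in\Pi_{1,1}$ with $\langle r,r\rangle \leq 0$ (they must be roots, hence of the form $(m,n)$ with $mn\geq -1$, and one checks a simple root cannot be a positive combination of other positive roots, which rules out all $m\geq 2$ since e.g. $(2,n) = (1,j)+(1,k)$ for suitable decompositions — so every simple root has the form $(1,n)$ with $n\geq -1$). Third, I would write the denominator identity for $\mathfrak m$. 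Because every $a_{ii}\leq 0$ (all simple roots imaginary or null), the Weyl group is trivial and the denominator formula reads
$$
e^\rho\prod_{\alpha>0}(1-e^\alpha)^{\dim\mathfrak m_\alpha}
= \sum_{w\in W}(-1)^{\ell(w)} w\!\left(e^{\rho}\prod_{\text{simple }\alpha}(\cdots)\right)
$$
which, with $W$ trivial, becomes simply $\prod_{\alpha>0}(1-e^\alpha)^{\dim\mathfrak m_\alpha} = \sum_S (-1)^{|S|}e^{\sigma(S)}$ where $S$ ranges over finite sets of mutually orthogonal imaginary simple roots and $\sigma(S)$ is their sum, with appropriate multiplicity bookkeeping for repeated simple roots. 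Setting $e^{(1,0)}=p$, $e^{(0,1)}=q$, the left-hand side is exactly $\prod_{i\geq 1, j\geq -1}(1-p^iq^j)^{c(ij)}$, which by Borcherds' product formula equals $p^{-1}(J(p)-J(q)) = p^{-1}\bigl(\sum_{i}c(i)p^i - \sum_j c(j)q^j\bigr)$.

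Fourth, and this is the crux, I would match coefficients. The right-hand side $p^{-1}(J(p)-J(q))$, expanded, has leading terms $p^{-1} - q^{-1} + (\text{terms } p^{m}q^{n})$; the $p$-linear part in $p$ at fixed small powers reveals the sum $\sum_S(-1)^{|S|}e^{\sigma(S)}$. One reads off that the degree-one-in-$p$ part forces the simple roots with $m=1$ to be exactly the vectors $(1,n)$ for $n=-1$ and $n>0$, and that each such root must be counted with multiplicity equal to $c(n) = \dim\mathfrak m_{(1,n)}$: indeed the coefficient of $p^1 q^n$ on the right (after the cancellation leaving $-q^{-1}$ and the $c(n)p q^n$-type contributions) matches the multiplicity with which $(1,n)$ can appear in an orthogonal family $S$, and comparing with $\dim\mathfrak m_{(1,n)}=c(n)$ from step one pins it down. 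The main obstacle — and the place to be careful — is precisely this bookkeeping of the denominator identity for a Borcherds algebra with repeated and imaginary simple roots: one needs the correct form of the right-hand side (the ``$\sum_S$'' over orthogonal subsets of imaginary simple roots, with signs $(-1)^{|S|}$ and the right handling of multiplicities), and one needs to know a priori that the Weyl group is trivial so that no real-root reflection terms intervene. Granting the product formula for $p(J(p)-J(q))$ (established separately) and the Borcherds denominator identity in this generality, the matching of Laurent coefficients is then forced and yields the stated simple roots $(1,n)$, $n=-1$ or $n>0$, each with multiplicity $c(n)$.
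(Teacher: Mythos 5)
Your overall strategy is the one the paper intends: the paper disposes of this theorem in a single sentence by identifying the product formula $p(J(p)-J(q))=\prod_{i\geq 1,\,j=-1,1,\ldots}(1-p^iq^j)^{c(ij)}$ with the denominator identity of the Borcherds algebra $\mathfrak m$, whose root multiplicities $\dim\mathfrak m_{(m,n)}=c(mn)$ come from the No-ghost theorem. So you are reconstructing exactly the intended argument, and your steps one and two are in order.

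There is, however, a concrete error in your step three that would derail the coefficient matching in step four: the Weyl group of $\mathfrak m$ is \emph{not} trivial, and not all simple roots are imaginary. The vector $(1,-1)$ is a simple root (it is the $n=-1$ case of the very statement being proved), and with the form $\langle (a,b),(c,d)\rangle=-ad-bc$ one computes $\langle (1,-1),(1,-1)\rangle=2$, so it is a \emph{real} simple root; this is exactly the entry $2$ in the upper-left corner of the matrix $B$ displayed in the paper. The Weyl group is therefore $\mathbb Z/2\mathbb Z$, generated by the reflection in $(1,-1)$, which acts on $\Pi_{1,1}$ by interchanging the two coordinates, i.e.\ by $p\leftrightarrow q$. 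Far from being negligible, this reflection is what produces the antisymmetric combination $J(p)-J(q)$ on the sum side of the denominator identity: the right-hand side is a sum of two Weyl-translates of $e^{\rho}S$, not the single ``collapsed'' sum over orthogonal sets of imaginary simple roots that you wrote down. (For the imaginary simple roots $(1,n)$, $n\geq 1$, one has $\langle (1,m),(1,n)\rangle=-m-n<0$, so no two are orthogonal and the correction term $S$ contributes only singletons; but the two Weyl-group terms must still both appear.) Your step four, which matches Laurent coefficients against a right-hand side lacking the reflection term, does not match the actual identity; the bookkeeping must be redone with $W=\{1,r_{(1,-1)}\}$, after which the matching does yield the stated simple roots and multiplicities. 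A smaller quibble: your reduction that every simple root has first coordinate $1$ is stated too loosely --- the mere existence of a decomposition $(2,n)=(1,j)+(1,k)$ into roots does not by itself show that $\mathfrak m_{(2,n)}$ is generated by lower root spaces; the clean way to pin down both the support and the multiplicities of the simple roots is the denominator identity itself, applied inductively on the first coordinate.
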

This theorem is  proven \cite{B3} by identifying the product formula
$$
p(J(p)-J(q))=  \prod_{ { i= 1,2, \ldots},  { j = -1,1,\ldots}}
(1-p^iq^j)^{c(ij)}
$$
with the denominator identity for the Borcherds algebra $\mathfrak m$.

  Since, by definition of $\mathfrak m$ the radical of the invariant  
form on $\mathfrak m$ is zero, the kernel of the homomorphism in  
Theorem \ref{recognize} is in the center of $\mathfrak g(A)$. We  
construct a symmetric matrix $B$, determined by the root lattice $\Pi_ 
{1,1}$ and the multiplicities given by Theorem 2.  We have  the Lie  
algebra $\frak m$ is isomorphic to $\frak g(B)/ \frak c$, where
$\frak g(B)$ is the Borcherds algebra associated to the following  
matrix $B$ and $\frak c$ is the full center of $\frak g(B)$:

$${B} = \left( \begin{array}{c|c|c|c}
           2  & \begin{array}{ccc}
           0&\cdots&0    \end{array}
             & \begin{array}{ccc}   -1 & \cdots & -1  \end{array} &  
\cdots \\
          \hline
          \begin{array}{c}  0 \\ \vdots \\ \ 0  \end{array}
                       & \begin{array}{ccc} -2 & \cdots &-2 \\
                                          \vdots & \ddots & \vdots \\
                                        -2 & \cdots & -2   \end{array} &
                       \begin{array}{ccc} -3 & \cdots &-3 \\
                                          \vdots & \ddots & \vdots \\
                                        -3 & \cdots & -3
                                         \end{array} & \cdots\\
    \hline
           \begin{array}{c}  -1 \\ \vdots \\ -1  \end{array} &
           \begin{array}{ccc} -3 & \cdots &-3 \\
                                          \vdots & \ddots & \vdots \\
                                        -3 & \cdots & -3
                                         \end{array} &
           \begin{array}{ccc} -4 & \cdots &-4 \\
                                          \vdots & \ddots & \vdots \\
                                        -4 & \cdots & -4
                                         \end{array} & \cdots \\
           \hline
           \vdots & \vdots & \vdots
          \end{array}   \right)  .   $$

  In this summary of Borcherds' proof of part of the moonshine  
conjectures we are able to bypass the part of the argument of \cite 
{B3}  that requires a more extensive development of the theory of  
Borcherds algebras including generalizing the results of \cite{GL}.  
Instead, we will  use Theorem \ref{thm:free} below, proven in \cite 
{Jur2}. Given a vector space $U$, let $L(U)$ denote the free Lie  
algebra generated by a basis of $U$.
  Let $J \subset I$ be the set $\{ i \in I  | a_{ii} >0\}$. Note that  
the
matrix $(a_{ij})_{i,j \in J}$ is a generalized Cartan matrix. Let
$\frak g_J$ be the Kac-Moody algebra associated to this matrix.
Then $\frak g_J = \frak n^+_J \oplus \frak h_J \oplus \frak n_J^-$, and
$\frak g_J$ is isomorphic to the subalgebra of $\frak g(A)$ generated
by $\{e_i, f_i\}$ with $i \in J$.

\begin{theorem}
  Let $A$ be a matrix satisfying conditions {\bf B1-B3}. Let $J$ and
$\frak g_J$ be as above. Assume
that if $i,j \in I \backslash J$ and $i \neq j$ then $a_{ij}<0$.
Then $$\frak g(A) = \frak u^+ \oplus (\frak g_J + \frak h) \oplus  
\frak u^-,$$
where
$\frak u^- = L(\coprod_{j \in I\backslash J}{  U}(\frak n^-_J) 
\cdot f_j) $
and
$\frak u^+= L(\coprod_{j \in I\backslash J}{  U}(\frak n^+_J)\cdot  
e_j ) $.
The ${  U}(\frak n^-_J)\cdot f_j $ for $j \in I\backslash J$ are  
integrable
highest weight $\frak g_J$-modules, and the ${  U}(\frak n^+_J) 
\cdot e_j $
are integrable lowest weight $\frak g_J$-modules.
\label{thm:free}
\end{theorem}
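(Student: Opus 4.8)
The plan is to produce the triangular decomposition directly from the defining relations of $\frak g(A)$, treating $\frak g_J$ as a known object (the Kac-Moody algebra on the principal submatrix indexed by $J$) and analyzing how the remaining generators $e_j,f_j$ for $j\in I\backslash J$ sit inside $\frak g(A)$. First I would observe that, since every $j\in I\backslash J$ has $a_{jj}\le 0$, there are no Serre relations involving such a $j$; the only relations constraining $e_j,f_j$ are the $\frak h$-weight relations, the relation $[e_i,f_j]=\delta_{ij}h_i$, and the commutation relations $[e_i,e_j]=0$, $[f_i,f_j]=0$ whenever $a_{ij}=0$. In particular, by the hypothesis that $a_{ij}<0$ for distinct $i,j\in I\backslash J$, none of the $e_j$ (resp.\ $f_j$) with $j\in I\backslash J$ commute with each other for trivial reasons, so after we quotient out the $\frak g_J$-action there is no further relation among them: the subalgebra they generate over $U(\frak n^-_J)$ is free. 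This is the structural reason the decomposition exists, and it is where Theorem~\ref{thm:free} differs from the general Borcherds structure theory.

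The key steps, in order, are as follows. (1) Set $\frak u^\pm$ to be the subalgebras of $\frak g(A)$ generated by $\{f_j : j\in I\backslash J\}$ (resp.\ $\{e_j\}$) together with their images under all iterated brackets with $\frak n^\mp_J$; equivalently, $\frak u^-$ is the $\frak n^-_J$-submodule (under the adjoint action, i.e.\ the $U(\frak n^-_J)$-submodule) of $\frak n^-$ generated by the $f_j$, and one checks it is actually a subalgebra. (2) Show, using the weight grading by the root lattice, that $\frak u^+$, $\frak g_J+\frak h$, and $\frak u^-$ have pairwise trivial intersection and span $\frak g(A)$: the root spaces of $\frak g_J+\frak h$ lie in the $\mathbb Z$-span of $\{\alpha_i : i\in J\}$ together with the full Cartan, while every root appearing in $\frak u^-$ (resp.\ $\frak u^+$) is a negative (resp.\ positive) combination that involves at least one $\alpha_j$ with $j\in I\backslash J$; since the simple roots are linearly independent in $(\frak h\ltimes\frak d)^*$ after the extension by $\frak d$, these root-lattice supports are disjoint. (3) Identify each $U(\frak n^-_J)\cdot f_j$ as a $\frak g_J$-module: the vector $f_j$ is annihilated by every $e_i$, $i\in J$ (because $[e_i,f_j]=\delta_{ij}h_i=0$ for $i\ne j$, and $i\ne j$ since $j\notin J$), so $f_j$ is a highest-weight vector of weight $-\alpha_j$ for $\frak g_J$; since $a_{ii}>0$ for $i\in J$ and $a_{ij}\le 0$, the integers $-2a_{ij}/a_{ii}\ge 0$ are nonnegative, so the weight $-\alpha_j$ is dominant integral for $\frak g_J$ and the module is integrable. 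The Serre relations $(\adj f_i)^{-2a_{ij}/a_{ii}+1}f_j=0$ for $i\in J$ are exactly the relations making $U(\frak n^-_J)\cdot f_j$ the \emph{integrable} highest-weight module $L(-\alpha_j)$ rather than the full Verma module. (4) Finally, prove freeness of $\frak u^-$: map the free Lie algebra $L\bigl(\coprod_{j\in I\backslash J}U(\frak n^-_J)\cdot f_j\bigr)$ onto $\frak u^-$ by sending generators to generators; it is surjective by construction, and injectivity follows because there are no relations in $\frak g(A)$ of the form $[f_j,\text{(word)}]=0$ or $[\text{(word)},\text{(word)}]=0$ for $j\in I\backslash J$ beyond those already accounted for inside the $U(\frak n^-_J)$-module structure — the hypothesis $a_{ij}<0$ for distinct $i,j\in I\backslash J$ kills the only candidate relations $[f_i,f_j]=0$, and a Poincar\'e--Birkhoff--Witt / grading argument on $U(\frak g(A))$ (or a comparison of graded dimensions via the denominator/character identity for Borcherds algebras) shows no further degeneracy occurs.

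The main obstacle I expect is step (4), the freeness: having no obvious relations is not the same as being free, and one must genuinely verify that the canonical surjection from the free Lie algebra is injective. The cleanest route is probably a PBW-type argument: realize $U(\frak g(A))$ as a free module over $U(\frak g_J+\frak h)$ on one side and over $U(\frak u^-)$ on the other, and then compare with the universal enveloping algebra of the free Lie algebra on $\coprod_j U(\frak n^-_J)\cdot f_j$ (which is a tensor algebra), matching graded pieces by root. Alternatively one invokes the already-developed structure theory of Borcherds algebras from \cite{B1}, \cite{B2} to get the PBW basis and the root multiplicities, and reads off that the multiplicity of each root in $\frak n^-$ outside $\frak g_J$ agrees with the multiplicity predicted by the free Lie algebra generated by the $\frak g_J$-modules $U(\frak n^-_J)\cdot f_j$; since the surjection is a weight-space-preserving map between spaces of equal (finite) dimension in each degree, it is an isomorphism. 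Either way, the remaining verifications — that $\frak u^\pm$ are subalgebras, that the sum is direct, and that the modules are integrable highest/lowest weight — are routine consequences of the defining relations and the root-lattice grading.
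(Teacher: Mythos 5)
Your steps (1)--(3) are sound and essentially match the setup of the proof the paper cites (in [J98] and [JLW95]): $\frak u^-$ is the ideal of $\frak n^-$ generated by the $f_j$, $j\in I\backslash J$; the root-support argument gives the directness of the sum; and each $U(\frak n^-_J)\cdot f_j$ is the irreducible integrable module of dominant integral highest weight $-\alpha_j$ because the only defining relations of $\frak g(A)$ touching $f_j$ are $[e_i,f_j]=0$ and the Serre relations $(\adj f_i)^{-2a_{ij}/a_{ii}+1}f_j=0$ for $i\in J$, which present exactly $L(-\alpha_j)$. The problem is step (4), which you correctly identify as the crux but do not actually close. Your first route --- PBW plus ``matching graded pieces by root'' --- cannot work as stated: PBW gives $U(\frak n^-)\cong U(\frak u^-)\otimes U(\frak n^-_J)$ from the semidirect product structure, but to conclude that the canonical surjection from the free Lie algebra is injective you would need to already know the root multiplicities of $\frak g(A)$ outside the $J$-span, and those are precisely what is in question. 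Your second route --- comparing the Borcherds denominator/character formula with the Witt-type dimension formula for the free Lie algebra on $\coprod_j L(-\alpha_j)$ --- is not something one ``reads off''; that comparison is essentially Kang's root multiplicity formula, whose proof runs through the generalized Kostant homology formula of [GL76]-type, i.e.\ exactly the machinery this theorem is introduced to avoid. So as written the freeness claim rests on ``no further relations are visible,'' which, as you yourself note, is not a proof.

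The argument in the cited source goes in the opposite direction and needs no dimension count. One first proves a general presentation lemma: if $\frak p$ is a Lie algebra and $V$ a $\frak p$-module, then the Lie algebra presented by generators $\frak p\oplus V$ and relations consisting only of the bracket relations of $\frak p$ together with $[x,v]=x\cdot v$ for $x\in\frak p$, $v\in V$ (and no relations among elements of $V$) is isomorphic to the semidirect product $\frak p\ltimes L(V)$, where $\frak p$ acts on the free Lie algebra $L(V)$ by the derivations extending its action on $V$. This is proved by universal properties alone: the presented algebra maps onto $\frak p\ltimes L(V)$ because the relations hold there, and $\frak p\ltimes L(V)$ maps back by the universal property of the free Lie algebra, the composites being the identity on generators. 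The theorem then follows by checking that, under the hypotheses $a_{jj}\le 0$ (no Serre relations internal to $I\backslash J$) and $a_{ij}<0$ for distinct $i,j\in I\backslash J$ (no commuting relations $[f_i,f_j]=0$), the presentation of $\frak g(A)$ is exactly of this form with $\frak p=\frak g_J+\frak h$ and $V=\coprod_{j\in I\backslash J}L(-\alpha_j)$ on the negative side: every defining relation involving some $f_j$, $j\in I\backslash J$, is absorbed into the $\frak p$-module structure of $V$, so freeness is built in rather than verified a posteriori. If you reorganize your step (4) around this lemma, the rest of your outline goes through.
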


Note that the conditions on the $a_{ij}$ given in the theorem are
equivalent to the statement that the Lie algebra has no mutually
orthogonal imaginary simple roots. This is the case for the monster  
Lie algebra $\mathfrak m$.

The structure of $\mathfrak m$ can now be summarized.
There are natural isomorphisms
$$\frak m_{(m,n)} \cong V^\natural_{ mn+1 } \mbox{ as an $M$-module  
for }
(m,n) \neq (0,0),$$
$$\frak m_{(0,0)} \cong \mathbb C \oplus \mathbb C, \mbox{ a trivial
$M$-module}.$$
It follows from the definition of $\frak m$ that
$$\frak m_{(-1,1)} \oplus \frak m_{(0,0)}\oplus  \frak m_{(1,-1)} \cong
\frak g \frak l_2.$$
Applying Theorem \ref{thm:free} to the above realization of $ 
\mathfrak m$ by generators and relations gives
$$
\frak m = \frak u^+ \oplus \frak g \frak l_2 \oplus \frak u^- ,
$$
with $ \frak u^- = L({U})$ and $\frak u^+ = L({U}')$.
Where $L({U}),  L({U}')$ are free Lie algebras over vector spaces
that are direct sums of $\frak g\frak l_2$-modules.
$${U} = \coprod_{i >0}W_i \otimes V^\natural_{i +1}\mbox{ and }
{U}' = \coprod_{i >0}{W'_i \otimes V^\natural_{i+1}}.$$
For $i >0$,  $V^\natural_{i+1}$ is (as usual) the weight $i+1$  
component of $V^\natural$,  $W_i$ denotes the (unique up to  
isomorphism) irreducible highest weight $\frak g  \frak l_2$-module of
dimension $i$ on which $z$ acts as $i +1$ and $W'_i$, $i >0$,   
denotes the irreducible lowest weight module.

\section{The homology computation and recursion formulas}
We are now ready to establish the recursion relations for the  
coefficients of the McKay-Thompson series
$\sum_{i >0} {\rm Tr}(g|V_{i +1}^{\natural})q^i = \sum_{i \in \mathbb  
Z}  c_g(i) q^i$.  What follows is a summary of what has
appeared in \cite{jlw}.  See \cite{MR1256622} and \cite{MR1363975}  
for similar computations.

To compute the homology of the free Lie algebra $L(U)$ for a vector
space $U$, with coefficients in the trivial module (as in \cite 
{CE}),  consider
the following exact sequence is a ${  U}(L(U)) = T(U)$-free  
resolution
of the trivial module:
$$
0 \rightarrow T(U) \otimes U {\overset {\mu}  {\rightarrow}} T(U)
{\overset {\epsilon} {\rightarrow}} {\mathbb C}  \rightarrow 0
$$
where $\mu$ is the multiplication map and $\epsilon$ is the
augmentation map.
One obtains :
$$
H_0(L(U),{\mathbb C}) = \mathbb C \label{eq:h0}
$$
$$
H_1(L(U),{\mathbb C}) =  U \cong L(U)/ [L(U), L(U)]
$$
$$
H_n( L(U) ,{\mathbb C}) =0 \mbox{ for } n \geq 2.
$$

Let $p$, $q$ and $t$ be commuting formal variables. The
variables
$p^{-1}$ and $q^{-1}$ will be used to
index the $\mathbb Z \oplus \mathbb Z$-grading of our vector spaces.
All of the $\mathbb M$-modules we encounter are finite-dimensionally
$\mathbb Z \oplus \mathbb Z$-graded with grading suitably truncated  
and will
be identified with formal series in $R(\mathbb M)[[p,q]]$.  
Definitions and
results from \cite{Knu} about the
$\lambda$-ring $R(\mathbb M)$ of finite-dimensional representations  
of $\mathbb M$ are
applicable to formal series in $R(\mathbb M)[[p,q]]$. We summarize  
the results
of, for example, \cite{Knu} that we use below.

The
representation ring $R(\mathbb M)$ is a $\lambda$-ring \cite{Knu}  
with the
$\lambda$ operation given by exterior powers, so $\lambda^i V =
\bigwedge^i V$ for $V \in R(\mathbb M)$.

In the following discussion we let $W,V \in R(\mathbb M)$. The operation
$\bigwedge^i$ satisfies
$$
{\textstyle\bigwedge^i} (W\oplus V)
=\sum_{n=0}^i {\textstyle\bigwedge^n(W)\otimes \bigwedge^{i-n}}(V) .
\label{eq:rul}
$$
Define
$$\textstyle
\bigwedge_t(W) = \bigwedge^0 (W) +\bigwedge^1(W)t +\bigwedge^2 (W)t^2  
+\cdots.
\label{eq:wedge}
$$
Then
\begin{equation}\textstyle
\bigwedge_t (V \oplus W) = \bigwedge_t(V) \cdot \bigwedge_t(W).
\label{eq:times}
\end{equation}
The Adams operations
  $\Psi^k : R(\mathbb M) \rightarrow R(\mathbb M)$ are defined for
$W \in R(\mathbb M)$ by:
\begin{equation}
{\textstyle {\frac {d} { dt}} \log \bigwedge_t(W)} = \sum_{n \geq 0}  
(-1)^n
\Psi^{n+1} (W) t^n .
\label{eq:log}
\end{equation}
  For a class function $f : \mathbb M \rightarrow \mathbb C$, define
$$
(\Psi^k f)(g) = f(g^k).
$$
for all $g \in \mathbb M$.

Now let $W$ be a finite-dimensionally $\mathbb Z \oplus \mathbb Z$- 
graded
representation of $\mathbb M$ such that $W_{(\gamma_1, \gamma_2)}=0$ for
$\gamma_1, \gamma_2 >0$. We shall write
$
W =\sum_{(\gamma_1,\gamma_2) \in \mathbb N^2}W_{(-\gamma_1,
-\gamma_2) }p^{\gamma_1}q^{\gamma_2},\label{eq:fgra}
$
identifying the graded space and formal series.
We extend the definition of $\Psi^k$ to formal series $W\in R(\mathbb  
M)[[p,q]]$
by defining $\Psi^k (p)= p^k$, $\Psi^k (q)= q^k$ and in general,
$$
\Psi^k ( \sum_{(\gamma_1,\gamma_2) \in \mathbb N^2}W_{(-\gamma_1,
-\gamma_2)}p^{\gamma_1}q^{\gamma_2}) =
         \sum_{(\gamma_1,\gamma_2) \in \mathbb N^2}\Psi^k
(W_{(-\gamma_1, -\gamma_2)}) p^{k\gamma_1}q^{k\gamma_2}.
\label{eq:def}$$

   Recall the structure of $\frak u^-$ and ${U} =
H_1(\frak u^-)$ as
$\mathbb Z \oplus \mathbb Z$-graded $M$-modules. We index the
grading by
$p^{-1}$ and $q^{-1}$; then write $\frak u^-$
and ${U} = H_1(\frak u^-)$ as elements of $R[M][[p,q]]$:
\begin{equation}
\frak u^- = \sum_{(m,n)} V^\natural_{ mn+1 } p^m q^n \label{eq:u}
\end{equation}
and
\begin{equation}
U = \sum_{(m,n)} V^\natural_{ m+n } p^m q^n, \label{eq:v}
\end{equation}
where here and below the sums are over all pairs $(m,n)$ such that  
$m,n >0$.

Define
$$H_t(\frak u^-) = \sum_{i=0}^\infty H_i (\frak u^-) t^i$$
and let $H(\frak u^-)$ denote the alternating sum $H_t(\frak
u^-)|_{t={-1}}$.
Recall the Euler-Poincar{\'e} identity:
\begin{equation}\textstyle
\bigwedge_{-1}(\frak u^-) = H(\frak u^-). \label{eq:hom}
\end{equation}
Taking $\log$ of both sides of (\ref{eq:hom}) results in the formal  
power
series identity in $R(M)[[p,q]]\otimes \mathbb Q$:
\begin{equation}
\textstyle
  \log \bigwedge_{-1}(\frak u^-) =  \log H (\frak u^-),\label{eq:lg}
\end{equation}
where we have
$$\log H (\frak u^-)= \log(1 - H_1(\frak u^-)) = -\sum_{n =1}^\infty
{\frac {1} {n}}H_1(\frak u^-)^n. $$
Formally integrating (\ref{eq:log}), with $W = \frak u^-$, gives
$$
{\textstyle \log \bigwedge_{t}}(\frak u^-)=
-\sum_{n \geq 0}  \Psi^{n+1} (\frak u^-) {\frac {(-t)^{n+1}}{ n+1}}.
$$
Then setting $t =-1$ gives:
$$ -\log {\textstyle\bigwedge_{-1}}(\frak u^-) =  \sum_{k=1}^\infty
{\frac {1} { k}}\Psi^k(\frak u^-).$$
Since $H_1(\frak u^-) = {U}$, equation (\ref{eq:lg}) gives
\begin{equation}
\sum_{k=1}^\infty {\frac {1} {k}}\Psi^k(\sum_{(m,n)} V^\natural_{ mn 
+1 }p^m
q^n)
= \sum_{k=1}^\infty {\frac {1} {k}}( \sum_{(m,n)} V^\natural_{ m+n }p^m
q^n)^k
\label{eq:adams}
\end{equation}

  We say $k|(i,j)$ if $k(m,n)=(i,j)$ for some $(m,n) \in \mathbb Z 
\oplus \mathbb Z$.
  For $(i,j) \in \mathbb Z_+ \oplus \mathbb Z_+$ we define
$$P(i,j) = \{ a=(a_{rs})_{r,s \in \mathbb Z_+}\ |\
a_{rs}\in \mathbb N,\ \sum_{(r,s)\in \mathbb Z_+\oplus \mathbb Z_+}
a_{rs}(r,s)=(i,j)\}.$$
We will use the notation $|a| = \sum a_{rs}$, $a! = \prod a_{rs}!$.  
Expanding both sides of
equation (\ref{eq:adams})

\begin{equation}\sum_{(i,j)} \sum_{k|(i,j)}{\frac {1}{k}}\Psi^k(
V^\natural_{ij/k^2 +1}) p^{i}q^{j}
=  \sum_{(i,j)} \sum_{a \in P(i,j)}{\frac{(|a|-1)!}
{a!}} \prod_{r,s \in \mathbb Z_+} (V^\natural_{ r+s })^{a_{rs}} p^i q^j.
  \label{identity}\end{equation}

  Then taking the trace of an element $g \in \mathbb M$ on both sides  
of the identity (\ref{identity})
$$\sum_{(i,j)} \sum_{k| (i,j)}{\frac {1}{ k}}\Psi^k(c_g(ij/k^2)) p^{i} 
q^{j}
$$
$$=  \sum_{(i,j)}\sum_{a \in P(i,j)}{\frac {(|a|-1)!}{ a!}} \prod_ 
{r,s \in \mathbb Z_+}
c_g(r+s-1)^{a_{rs}} p^i q^j. $$
Equating the coefficients of $p^iq^j$  and applying M\"obius  
inversion yields the recursion formulas:

\begin{equation}c_g(ij)
= \sum_{\overset {k>0} {k(m,n)= (i,j)}}{ \frac {1}{ k}} \mu(k)\big 
(\sum_{a
\in P(m,n)} {\frac{(|a|-1)!}{ a!}} \prod_{r,s \in \mathbb Z_+}
c_{g^k}(r+s-1)^{a_{rs}}\big). \label{recursion}\end{equation}

The coefficients of any replicable function are determined by the  
first 23 coefficients \cite{CmN}, but in the case of the above
McKay-Thompson series we can use a smaller set of coefficients.

An examination of the formula (\ref{recursion}) shows that $c_g(n)$ is
determined by expressions of lower level except when $n =1,2,3,5$.
Thus the values of the $c_g(n)$ are determined by the $c_h(1)$,
$c_h(2)$, $c_h(3)$, $c_h(5)$, $h \in \mathbb M$, and the above  
recursions.

As in \cite{B3}, we conclude that
since both the McKay-Thompson series for $V^\natural$ and the
modular functions of \cite{CN} satisfy (\ref{recursion}),
all that is necessary to prove that these functions
are the same is to check the initial data listed above. For the  
modular functions, see \cite{Ko} and  \cite{MR1371745}.
For the relevant initial data about the graded traces of the actions  
of the elements of  the Monster on $V^\natural$, the main theorem of  
\cite{FLM} is needed, as used in \cite{B3}.

  \def\cprime{$'$} \def\cprime{$'$} \def\cprime{$'$}
\providecommand{\bysame}{\leavevmode\hbox to3em{\hrulefill}\thinspace}
\providecommand{\MR}{\relax\ifhmode\unskip\space\fi MR }
% \MRhref is called by the amsart/book/proc definition of \MR.
\providecommand{\MRhref}[2]{%
   \href{http://www.ams.org/mathscinet-getitem?mr=#1}{#2}
}
\providecommand{\href}[2]{#2}

%\affiliationone{ 
%   Elizabeth Jurisich\\ Department of Mathematics\\
%  The College of Charleston\\
%   Charleston SC 29424\\
%   \email{jurisiche@cofc.edu}}

\end{document}